\numberwithin{equation}{section}
\numberwithin{figure}{section}
\theoremstyle{plain}
\newtheorem{thm}{\protect\theoremname}[section]
\theoremstyle{plain}
\theoremstyle{definition}
\theoremstyle{plain}
\newtheorem{lem}[thm]{\protect\lemmaname}
\theoremstyle{plain}
\theoremstyle{plain}
\providecommand{\definitionname}{Definition}
\providecommand{\lemmaname}{Lemma}
\providecommand{\theoremname}{Theorem}
\providecommand{\corollaryname}{Corollary}
\providecommand{\remarkname}{Remark}
\providecommand{\propositionname}{Proposition}
\DeclareMathOperator{\loc}{loc}
\DeclareMathOperator{\ess}{ess}
\DeclareMathOperator{\cp}{cap}
\DeclareMathOperator{\ACL}{ACL}
\DeclareMathOperator{\diam}{diam}
\begin{document}

\title[On geometric characterizations of mappings]
{On geometric characterizations of mappings generate composition operators on Sobolev spaces}

\author{Alexander Ukhlov}
\begin{abstract}
In this work we consider refined geometric characterizations of mappings generate composition operators on Sobolev spaces. The detailed proofs in the cases $n-1<q<n$ and $n>q$ are given.
\end{abstract}
\maketitle
\footnotetext{\textbf{Key words and phrases:} Quasiconformal mappings, Sobolev spaces}
\footnotetext{\textbf{2010 Mathematics Subject Classification:} 30C65, 46E35}

\section{Introduction}
In this work we consider refined geometric characterizations \cite{GGR95,VU98} of mappings generate composition operators on Sobolev spaces. Recall that quasiconformal mappings allow the geometric description in the terms of geometric dilatations \cite{Ge60} and are closely connected with composition operators on Sobolev spaces \cite{VG75}. The bounded composition operators on Sobolev spaces arise in the Sobolev embedding theory \cite{GG94,GS82} and have applications in the weighted Sobolev spaces theory \cite{GU09} and in the spectral theory of elliptic operators \cite{GU17}. The theory of multipliers in connections with composition operators was considered in \cite{MS86}. In \cite{U93,VU98} were given various characteristics of homeomorphisms $\varphi: \Omega\to\widetilde{\Omega}$, where $\Omega, \widetilde{\Omega}$ are domains in $\mathbb R^n$, which generate by the composition rule $\varphi^{\ast}(f)=f\circ\varphi$ the bounded embedding operators on Sobolev spaces:
\begin{equation}
\label{pq}
\varphi^{\ast}:L^1_p(\widetilde{\Omega})\to L^1_q(\Omega),\,\,1<q\leq p<\infty.
\end{equation}

The mappings generate bounded composition operators \eqref{pq} are called as weak $(p,q)$-quasiconformal mappings \cite{GGR95,VU98} because in the case $p=q=n$ we have usual quasiconformal mappings \cite{VG75}. In \cite{U93,VU98} it was proved that the homeomorphism $\varphi:\Omega\to\widetilde{\Omega}$ is the weak $(p,q)$-quasiconformal mapping,  if and only if $\varphi\in W^1_{1,\loc}(\Omega)$, has finite distortion and
$$
K_{p,q}^{\frac{pq}{p-q}}(\varphi;\Omega)=\int\limits_{\Omega}\left(\frac{|D\varphi(x)|^p}{|J(x,\varphi)|}\right)^{\frac{q}{p-q}}~dx<\infty,
\,\,1< q<p< \infty.
$$
and
$$
K_{p,p}^{p}(\varphi;\Omega)=\ess\sup\limits_{\Omega}\frac{|D\varphi(x)|^p}{|J(x,\varphi)|}<\infty,\,\,1< q=p< \infty.
$$
In the case $1< q=p< \infty$ such mappings are called as a weak $p$-quasiconformal mappings \cite{GGR95}.

The capacitory characterizations of weak $(p,q)$-quasiconformal mappings were given in \cite{U93,VU98}. It was proved that the homeomorphism $\varphi:\Omega\to\widetilde{\Omega}$ is the weak $(p,q)$-quasiconformal mapping,  if and only if   
the inequalities
$$
\cp_{p}^{1/p}(\varphi^{-1}(F_0),\varphi^{-1}(F_1);\Omega) \leq K_{p,p}(\varphi;\Omega)\cp_{p}^{1/p}(F_0,F_1;\widetilde{\Omega})
$$
and
$$
\cp_{q}^{1/q}(\varphi^{-1}(F_0),\varphi^{-1}(F_1);\Omega)
\leq\widetilde{\Phi}(\widetilde{\Omega}\setminus(F_0\cup F_1))^{\frac{p-q}{pq}}
\cp_{p}^{1/p}(F_0,F_1;\widetilde{\Omega})
$$
where $\widetilde{\Phi}$ is a bounded monotone countable-additive set function defined on open subsets of $\widetilde{\Omega}$,
hold for every condenser $(F_0,F_1)\subset \widetilde{\Omega}$.

The aim of the present work is to give the refined characterizations of weak $(p,q)$-quasiconformal mappings in the terms of the geometric dilatation
$$
H_{p}^{\lambda}(x,r)=\frac{L_{\varphi}^{p}(x,r)r^{n-p}}{|\varphi(B(x,\lambda r))|},\,\, \lambda\geq 1,
$$
where $L_{\varphi}(x,r)=\max\limits_{|x-y|=r}|\varphi(x)-\varphi(y)|$, with detailed proofs. 

The first time geometric characterizations of weak $p$-quasiconformal mappings, $p\ne n$, were introduced in \cite{GGR95}, but without detailed proofs. The geometric characterizations of weak $(p,q)$-quasiconformal mappings on Carnot groups were considered in \cite{VU98}, without the special description ($\lambda=1$) in the case case $n<q<p<\infty$. The geometric characterizations in the Euclidean case $\mathbb R^n$ were considered in the manuscript \cite{U94}. 

Remark that geometric characterizations of weak $p$-quasiconformal mappings can be defined on metric measure spaces and so can be used in the geometric analysis on metric measure spaces.

The author is grateful to Vladimir Gol'dshtein for useful discussions and valuable remarks on geometric properties of generalized quasiconformal mappings. 

\section{Composition operators on Sobolev spaces}

\subsection{Sobolev spaces}

Let us recall the basic notions of the Sobolev spaces.
Let $\Omega$ be an open subset of $\mathbb R^n$. The Sobolev space $W^1_p(\Omega)$, $1\leq p\leq\infty$, is defined \cite{M}
as a Banach space of locally integrable weakly differentiable functions
$f:\Omega\to\mathbb{R}$ equipped with the following norm: 
\[
\|f\mid W^1_p(\Omega)\|=\| f\mid L_p(\Omega)\|+\|\nabla f\mid L_p(\Omega)\|,
\]
where $\nabla f$ is the weak gradient of the function $f$, i.~e. $ \nabla f = (\frac{\partial f}{\partial x_1},...,\frac{\partial f}{\partial x_n})$. The Sobolev space $W^{1}_{p,\loc}(\Omega)$ is defined as a space of functions $f\in W^{1}_{p}(U)$ for every open and bounded set $U\subset  \Omega$ such that $\overline{U}  \subset \Omega$.

The homogeneous seminormed Sobolev space $L^1_p(\Omega)$, $1\leq p\leq\infty$, is defined as a space
of locally integrable weakly differentiable functions $f:\Omega\to\mathbb{R}$ equipped
with the following seminorm: 
\[
\|f\mid L^1_p(\Omega)\|=\|\nabla f\mid L_p(\Omega)\|.
\]

In the Sobolev spaces theory, a crucial role is played by capacity as an outer measure associated with Sobolev spaces \cite{M}. In accordance to this approach, elements of Sobolev spaces $W^1_p(\Omega)$ are equivalence classes up to a set of $p$-capacity zero \cite{MH72}.

Recall that a function $f:\Omega\to\mathbb R$ belongs to the class $\ACL(\Omega)$ if it is absolutely continuous on almost all straight lines which are parallel to any coordinate axis. Note that $f$ belongs to the Sobolev space $W^1_{1,\loc}(\Omega)$ if and only if $f$ is locally integrable and it can be changed by a standard procedure (see, e.g. \cite{M} ) on a set of measure
zero (changed by its Lebesgue values at any point where the Lebesgue values exist) so that a modified function belongs to $\ACL(\Omega)$, and its partial derivatives $\frac{\partial f}{\partial x_i}$, $i=1,...,n$, existing a.e., are locally integrable in $\Omega$.

The mapping $\varphi:\Omega\to\mathbb{R}^{n}$ belongs to the Sobolev space $W^1_{p,\loc}(\Omega)$, if its coordinate functions belong to $W^1_{p,\loc}(\Omega)$. In this case, the formal Jacobi matrix $D\varphi(x)$ and its determinant (Jacobian) $J(x,\varphi)$
are well defined at almost all points $x\in\Omega$. The norm $|D\varphi(x)|$ is the operator norm of $D\varphi(x)$. Recall that a mapping $\varphi:\Omega\to\mathbb{R}^{n}$ belongs to $W^1_{p,\loc}(\Omega)$, is a mapping of finite distortion if $D\varphi(x)=0$ for almost all $x$ from $Z=\{x\in\Omega: J(x,\varphi)=0\}$ \cite{VGR}. 
Recall the notion of of the variational $p$-capacity associated with Sobolev spaces \cite{GResh}. The condenser in the domain $\Omega\subset \mathbb R^n$ is the pair $(F_0,F_1)$ of connected closed relatively to $\Omega$ sets $F_0,F_1\subset \Omega$. A continuous function $u\in L_p^1(\Omega)$ is called an admissible function for the condenser $(F_0,F_1)$,
if the set $F_i\cap \Omega$ is contained in some connected component of the set $\operatorname{Int}\{x\vert u(x)=i\}$,\ $i=0,1$. We call $p$-capacity of the condenser $(F_0,F_1)$ relatively to domain $\Omega$
the value
$$
{{\cp}}_p(F_0,F_1;\Omega)=\inf\|u\vert L_p^1(\Omega)\|^p,
$$
where the greatest lower bond is taken over all admissible for the condenser $(F_0,F_1)\subset\Omega$ functions. If the condenser have no admissible functions we put the capacity is equal to infinity.

\subsection{Composition operators}

Let $\Omega$ and $\widetilde{\Omega}$ be domains in the Euclidean space $\mathbb R^n$. Then a homeomorphism $\varphi:\Omega\to\widetilde{\Omega}$ generates a bounded composition
operator 
\[
\varphi^{\ast}:L^1_p(\widetilde{\Omega})\to L^1_q(\Omega),\,\,\,1\leq q\leq p\leq\infty,
\]
by the composition rule $\varphi^{\ast}(f)=f\circ\varphi$, if for
any function $f\in L^1_p(\widetilde{\Omega})$, the composition $\varphi^{\ast}(f)\in L^1_q(\Omega)$
is defined quasi-everywhere in $\Omega$ and there exists a constant $K_{p,q}(\varphi;\Omega)<\infty$ such that 
\[
\|\varphi^{\ast}(f)\mid L^1_q(\Omega)\|\leq K_{p,q}(\varphi;\Omega)\|f\mid L^1_p(\widetilde{\Omega})\|.
\]

Recall that the $p$-dilatation \cite{Ger69} of a Sobolev mapping $\varphi: \Omega\to \widetilde{\Omega}$ at the point $x\in\Omega$ is defined as
$$
K_p(x)=\inf \{k(x): |D\varphi(x)|\leq k(x) |J(x,\varphi)|^{\frac{1}{p}}\}.
$$

\begin{thm}
\label{CompTh} Let $\varphi:\Omega\to\widetilde{\Omega}$ be a homeomorphism
between two domains $\Omega$ and $\widetilde{\Omega}$. Then $\varphi$ generates a bounded composition
operator 
\[
\varphi^{\ast}:L^1_p(\widetilde{\Omega})\to L^1_{q}(\Omega),\,\,\,1> q\leq p\leq\infty,
\]
 if and only if $\varphi\in W^1_{q,\loc}(\Omega)$
and 
\[
K_{p,q}(\varphi;\Omega) := \|K_p \mid L_{\kappa}(\Omega)\|<\infty, \,\,1/q-1/p=1/{\kappa}\,\,(\kappa=\infty, \text{ if } p=q).
\]
The norm of the operator $\varphi^\ast$ is estimated as $\|\varphi^\ast\| \leq K_{p,q}(\varphi;\Omega)$.
\end{thm}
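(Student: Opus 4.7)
My plan is to prove the two implications in Theorem~\ref{CompTh} separately, using a chain-rule/H\"older argument for sufficiency and a set-function differentiation scheme for necessity. The bridge between the integral hypothesis and the geometric characterizations recalled in the introduction is the identity
\[
K_p(x)^\kappa = \left(\frac{|D\varphi(x)|^p}{|J(x,\varphi)|}\right)^{q/(p-q)},\qquad J(x,\varphi)\ne 0,
\]
so that $\|K_p\mid L_\kappa(\Omega)\|<\infty$ is equivalent to the integral quantity $K_{p,q}(\varphi;\Omega)<\infty$ mentioned there.

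For sufficiency, I assume $\varphi\in W^1_{q,\loc}(\Omega)$ with $\|K_p\mid L_\kappa(\Omega)\|<\infty$. Finiteness of $K_p$ almost everywhere forces finite distortion, since $|D\varphi(x)|\le K_p(x)|J(x,\varphi)|^{1/p}$ makes $|D\varphi|$ vanish on $\{J(x,\varphi)=0\}$. For a smooth $f\in L^1_p(\widetilde{\Omega})$ the $\ACL$ chain rule yields $\nabla(f\circ\varphi)(x)=D\varphi(x)^{t}\nabla f(\varphi(x))$ a.e., whence
\[
|\nabla(f\circ\varphi)(x)|^q \le K_p(x)^q\,|J(x,\varphi)|^{q/p}\,|\nabla f(\varphi(x))|^q.
\]
H\"older's inequality with exponents $p/q$ and $\kappa/q$ (in the case $p=q$ replaced by an essential supremum estimate), combined with the area formula for Sobolev homeomorphisms of finite distortion
\[
\int_\Omega |\nabla f(\varphi(x))|^p\,|J(x,\varphi)|\,dx \le \int_{\widetilde{\Omega}}|\nabla f(y)|^p\,dy,
\]
gives $\|\nabla(\varphi^*f)\mid L_q(\Omega)\|\le \|K_p\mid L_\kappa(\Omega)\|\,\|\nabla f\mid L_p(\widetilde{\Omega})\|$. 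A density and capacity argument then extends the estimate to arbitrary $f\in L^1_p(\widetilde{\Omega})$, with $\varphi^*f$ defined quasi-everywhere.

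For necessity, I first recover $\varphi\in W^1_{q,\loc}(\Omega)$ by applying $\varphi^*$ to coordinate cut-offs: for a ball $B=B(y_0,r)$ with $\overline{B}\subset\widetilde{\Omega}$ and each coordinate $i$, the test function $f_i(y)=\eta(y)y_i$ with $\eta\equiv 1$ on $\tfrac12 B$ satisfies $\varphi^*f_i=\varphi_i$ on $\varphi^{-1}(\tfrac12 B)$, so the operator bound forces $\nabla\varphi_i\in L_q(\varphi^{-1}(\tfrac12 B))$. To extract the integrability of $K_p$ I would introduce the set function
\[
\widetilde{\Phi}(\widetilde{U})=\sup\left\{\frac{\|\nabla(\varphi^*f)\mid L_q(\varphi^{-1}(\widetilde{U}))\|^q}{\|f\mid L^1_p(\widetilde{U})\|^q} : f\in L^1_p(\widetilde{U}),\ f\ne 0\right\},
\]
verify that it is monotone and countably additive on disjoint open subsets of $\widetilde{\Omega}$ (using the locality of the Dirichlet energy and the pasting of admissible functions with disjoint supports), and then differentiate it. At almost every point of differentiability of $\varphi$ with $J(x,\varphi)\ne 0$, testing with functions locally close to linear approximations of $D\varphi(x)^{-1}$ gives the pointwise estimate $K_p(x)^\kappa \le (d\widetilde{\Phi}/dy)(\varphi(x))\cdot|J(x,\varphi)|$. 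Integrating and changing variables yields $\|K_p\mid L_\kappa(\Omega)\|^\kappa \le \widetilde{\Phi}(\widetilde{\Omega}) < \infty$, which also gives the norm estimate $\|\varphi^*\|\le K_{p,q}(\varphi;\Omega)$ when combined with the sufficiency direction.

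The main obstacle lies in the necessity step, specifically in verifying countable additivity of $\widetilde{\Phi}$ and in executing the pointwise differentiation: both require careful handling of the exceptional sets where $\varphi$ fails to be differentiable or $J$ vanishes, together with a Vitali-type covering argument that transfers Lebesgue differentiation on $\widetilde{\Omega}$ to infinitesimal control of $D\varphi$ on $\Omega$. An alternative route is to first establish the capacitory characterization recalled in the introduction and then deduce the integral bound by differentiating capacity along shrinking concentric balls; this trades one technical difficulty for another but avoids the explicit supremum in the definition of $\widetilde{\Phi}$.
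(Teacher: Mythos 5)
The paper does not actually prove Theorem~\ref{CompTh}: it is quoted with attributions to \cite{VG75} (the case $p=q=n$) and \cite{U93} (the general case $1\leq q\leq p<\infty$, via the weak change-of-variables formula of \cite{H93}). Your outline follows the same route as that cited proof: for sufficiency, finite distortion plus the chain rule, H\"older with exponents $p/q$ and $\kappa/q$, and the area-formula inequality for Sobolev homeomorphisms; for necessity, recovery of $W^1_{q,\loc}$ from coordinate cut-offs and a monotone countably additive set function differentiated at Lebesgue points. The sufficiency half is essentially complete, and the identity $K_p(x)^{\kappa}=\bigl(|D\varphi(x)|^p/|J(x,\varphi)|\bigr)^{q/(p-q)}$ you use as a bridge is correct.

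There is, however, a concrete defect in the necessity half: your set function carries the exponent $q$,
\[
\widetilde\Phi(\widetilde U)=\sup_{f}\frac{\|\varphi^{\ast}f\mid L^1_q(\Omega)\|^{q}}{\|f\mid L^1_p(\widetilde U)\|^{q}},
\]
whereas countable additivity requires the exponent $\kappa=pq/(p-q)$, exactly as in the paper's definition \eqref{setfunc}. For disjoint $\widetilde U_i$ with near-extremal $f_i$, writing $a_i=\|\varphi^{\ast}f_i\mid L^1_q\|$ and $b_i=\|f_i\mid L^1_p\|$, the optimization over independent rescalings $f=\sum t_if_i$ gives
\[
\sup_{t}\frac{\bigl(\sum t_i^qa_i^q\bigr)^{1/q}}{\bigl(\sum t_i^pb_i^p\bigr)^{1/p}}=\Bigl(\sum\bigl(a_i/b_i\bigr)^{\kappa}\Bigr)^{1/\kappa},
\]
so only the $\kappa$-th power is additive; with the $q$-th power the final integration $\int_\Omega K_p^{\kappa}\,dx\leq\widetilde\Phi(\widetilde\Omega)$ does not balance either. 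Since your later displays do use $\kappa$, this is presumably a slip, but it is the load-bearing exponent of the construction. Two further points: the case $p=q$ (where $\kappa=\infty$ and the set function degenerates) needs a separate essential-supremum argument for necessity, which you do not address; and the two steps you yourself flag as the main obstacle --- countable additivity and the pointwise differentiation yielding $K_p(x)^{\kappa}\leq\widetilde\Phi'(\varphi(x))\,|J(x,\varphi)|$ --- are precisely the technical substance of the proof in \cite{U93}. As written, then, the proposal is a correct plan along the standard lines rather than a complete proof.
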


This theorem in the case $p=q=n$ was given in the work \cite{VG75}. The general case $1\leq q\leq p<\infty$ was proved in \cite{U93}, where the weak change of variables formula \cite{H93} was used (see, also the case $n<q=p<\infty$ in \cite{V88}).

\section{Geometric characterizations of mappings}

Let us recall the following covering lemma \cite{Va}:
\begin{lem}
\label{lem:VaCovering}
Let $F$ be a compact subset of $\mathbb R^1$. Then for every $\varepsilon>0$ there exists a number
$\delta>0$ such that for any $r\in (0,\delta)$ there exists a finite covering of $F$
by open intervals $\gamma_1,\gamma_2,\dots,\gamma_N$
such that

\noindent
{$1)$}
$|\gamma_i|=2r$, for all $1\leq i\leq N$;

\noindent
{$2)$}
centers of intervals $\gamma_i$ belong to $F$;

\noindent
{$3)$}
any point of the set $F$ belongs no more than two intervals $\gamma_i$;

\noindent
{$4)$} $Nr\leq |F|+\varepsilon$.
\end{lem}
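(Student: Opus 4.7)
The plan is to approximate $F$ from outside by an open set of nearly the same measure, choose $\delta$ small enough that the $r$-neighborhood of $F$ still fits inside this open set, and then select centers greedily from left to right along the real line.

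By outer regularity of Lebesgue measure I pick an open set $U\supset F$ with $|U|\le |F|+\varepsilon$. Since $F$ is compact and $\mathbb R\setminus U$ is closed and disjoint from $F$, their distance is strictly positive; call half of it $\delta>0$. For any $r\in(0,\delta)$ the open $r$-neighborhood $F_r=\{x\in\mathbb R:\operatorname{dist}(x,F)<r\}$ is then contained in $U$.

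Fix such an $r$. I construct the centers by the following one-dimensional greedy rule: set $x_1=\min F$ (which exists by compactness), and having chosen $x_1<x_2<\dots<x_i$ in $F$, let $x_{i+1}$ be the smallest element of $F\cap[x_i+r,\infty)$ when this set is nonempty and stop otherwise. Since the chosen centers are $r$-separated and $F$ is bounded, the process terminates at some $x_N$. Set $\gamma_i=(x_i-r,x_i+r)$; properties $(1)$ and $(2)$ are then immediate. For $(3)$, whenever $j\ge i+2$ one has $x_j-x_i\ge 2r$ by construction, so $\gamma_i\cap\gamma_j=\emptyset$; thus only consecutive intervals can meet and every point of $\mathbb R$ (in particular every point of $F$) lies in at most two of them. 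For the covering property, given $y\in F$ let $x_i$ be the largest selected center with $x_i\le y$; then necessarily $y<x_i+r$, for otherwise $y$ would itself be a legal choice for $x_{i+1}$, contradicting either the minimality of $x_{i+1}$ or the termination of the process; hence $y\in\gamma_i$.

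The only non-trivial estimate is $(4)$, and this is where the choice of $\delta$ matters. The key observation is to pass to the half-length intervals $I_i=[x_i-r/2,x_i+r/2)$: they are pairwise disjoint since consecutive centers are at least $r$ apart, and each $I_i$ lies within distance $r/2<r$ of the point $x_i\in F$, so $\bigcup_{i=1}^{N} I_i\subset F_r\subset U$. Summing lengths gives
$$
Nr=\sum_{i=1}^{N}|I_i|=\Bigl|\bigcup_{i=1}^{N}I_i\Bigr|\le|F_r|\le|U|\le|F|+\varepsilon,
$$
which is $(4)$. The main (and really only) obstacle is the bookkeeping at this last step: one must carry out the greedy construction at a scale $r$ small enough to absorb the mass of the thickening $F_r$ into the outer-regular approximation $U$. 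After that, the combinatorics of the greedy selection are straightforward in dimension one.
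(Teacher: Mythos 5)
Your proof is correct; the paper itself does not prove this lemma (it is quoted from V\"ais\"al\"a's book), and your argument is the standard one used there: an outer-regular open set $U\supset F$ with $|U|\le|F|+\varepsilon$, a $\delta$ small enough that the $r$-neighborhood $F_r$ stays inside $U$, a greedy left-to-right choice of $r$-separated centers in $F$ (which yields the covering and the multiplicity bound $2$), and the disjoint half-length intervals $I_i\subset F_r\subset U$ giving $Nr=\sum|I_i|\le|U|\le|F|+\varepsilon$. No gaps.
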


Let us recall the definition of the Hausdorff measure~$H^{\alpha}$. Let $A\subset \mathbb R^n$ be arbitrary set. Then for an arbitrary $r>0$ we
consider a countable covering $\{U_i\}$ of the set $A$ such that $\diam(U_i)<r$ for all $i$.
We put
$$
H_{r}^{\alpha}(A)=\inf\{\sum\limits_i(\diam U_i)^{\alpha}\},
$$
where the greatest lower bond is taken over all such coverings. The function $H_{r}^{\alpha}$ does not increase by $r$.
The Hausdorff measure is defined
$$
H^{\alpha}(A)=\lim\limits_{r\to 0}H_{r}^{\alpha}(A).
$$

\subsection{Weak $p$-quasiconformal mappings}

Let $\varphi :\Omega\to \widetilde{\Omega}$ be a homeomorphism. Follow \cite{GGR95} we introduce the geometric $p$-dilatation
$$
H_{p}^{\lambda}(x,r)=\frac{L_{\varphi}^{p}(x,r)r^{n-p}}{|\varphi(B(x,\lambda r))|},\,\, \lambda\geq 1,
$$
where $L_{\varphi}(x,r)=\max\limits_{|x-y|=r}|\varphi(x)-\varphi(y)|$.

\begin{thm} \label{thm:ACLPPconf} Let $\varphi :\Omega\to \widetilde{\Omega}$ be a homeomorphism which satisfies
$$
\limsup\limits_{r\to 0} H_{p}^{\lambda}(x,r)\leq H_p^{\lambda}<\infty, \,\,\text{for all}\,\,x\in\Omega,\,\,1<p<\infty.
$$
Then the homeomorphism $\varphi$ belongs to $\ACL(\Omega)$. Moreover $\varphi$ is differentiable almost everywhere in $\Omega$ and 
$\varphi\in W^1_{p,\loc}(\Omega)$.
\end{thm}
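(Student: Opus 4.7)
The plan is to leverage the pointwise dilatation estimate
$$
L_\varphi(x,r)^p\leq (H_p^\lambda+1)\,r^{p-n}\,|\varphi(B(x,\lambda r))|,
$$
valid for sufficiently small $r$, in a covering-plus-Fubini scheme along coordinate lines. My first task is uniformization: applied to a compact $K\subset\Omega$, Egoroff's theorem on the decreasing family $\sup_{r<\delta}H_p^\lambda(\cdot,r)$ produces $K'\subset K$ with $|K\setminus K'|<\eta$ and a $\delta_0>0$ such that the inequality above holds for every $x\in K'$ and every $r<\delta_0$. Since $\ACL$ is a local property, it is enough to work inside such a uniform region.

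For the ACL property I fix a coordinate direction, say $e_n$, and a closed rectangle $Q=Q'\times I$ contained in the uniform region, with an enlarged neighborhood $\Omega_0$ whose $\lambda\delta_0$-thickening lies in $\Omega$. Given $z\in Q'$ and a compact $F\subset I$, Lemma~\ref{lem:VaCovering} yields intervals $[t_j-r,t_j+r]$ of length $2r$ with centers in $F$, satisfying $N\leq (|F|+\varepsilon)/r$ and bounded overlap. Monotonicity of the homeomorphism (oscillation on a closed ball is attained on its boundary sphere) supplies $\operatorname{osc}(\varphi_z;[t_j-r,t_j+r])\leq 2L_\varphi((z,t_j),r)$. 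Summing, applying H\"older in $j$, and substituting the dilatation bound produces
$$
V_r(\varphi_z;F)^p\leq C\,N^{p-1}\,r^{p-n}\sum_j|\varphi(B((z,t_j),\lambda r))|.
$$

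I then integrate in $z\in Q'$. A Fubini computation on the ball indicator gives
$$
\int_{Q'}|\varphi(B((z,t_j),\lambda r))|\,dz\leq \omega_{n-1}(\lambda r)^{n-1}\,|\varphi(\Omega_0\cap\{|x_n-t_j|<\lambda r\})|,
$$
and the overlap of the slabs $\{|x_n-t_j|<\lambda r\}$ is $O(\lambda)$. The exponent cancellation $N^{p-1}r^{p-n}\cdot r^{n-1}=(Nr)^{p-1}\leq C|F|^{p-1}$ then delivers the key $r$-free estimate
$$
\int_{Q'}V_r(\varphi_z;F)^p\,dz\leq C\,|F|^{p-1}\,|\varphi(\Omega_0\cap\{x_n\in F^{(\lambda r)}\})|.
$$
Taking $F=I$ and invoking Fatou shows $\varphi_z\in BV(I)$ for a.e.\ $z\in Q'$; letting $|F|\downarrow 0$ then upgrades BV to absolute continuity, the right-hand side vanishing because $p>1$ forces $|F|^{p-1}\to 0$ while the image factor stays bounded by $|\varphi(\Omega_0)|$. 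Running the argument for each coordinate direction delivers $\varphi\in\ACL(\Omega)$.

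The remaining conclusions come from Stepanov's theorem. The Borel set function $A\mapsto|\varphi(A)|$ has finite upper Lebesgue density $\psi(x)$ for a.e.\ $x$, with $\psi\in L^1_{\loc}$, so the dilatation bound forces $\limsup_{r\to 0}L_\varphi(x,r)/r\leq (H_p^\lambda\lambda^n\psi(x))^{1/p}<\infty$ almost everywhere. Stepanov then gives differentiability of $\varphi$ a.e., and at each such point $|D\varphi(x)|^p\leq C\psi(x)$, whence $D\varphi\in L^p_{\loc}$ and $\varphi\in W^1_{p,\loc}(\Omega)$. The main technical obstacle is the exponent bookkeeping of the covering/H\"older/Fubini chain, which balances only because the definition of the geometric dilatation carries the precise weight $r^{n-p}$; a secondary subtlety is the BV-to-AC upgrade on lines, where one must pass from an integrated bound to a pointwise-in-$z$ statement valid for \emph{all} compact $F$ of small measure.
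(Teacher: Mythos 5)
Your overall skeleton (covering lemma, H\"older with exponent $p$, bounded overlap, the $r^{n-p}$ cancellation, then Stepanov for differentiability and $\Psi'\in L_{1,\loc}$ for $|D\varphi|\in L_{p,\loc}$) is the right one, and the differentiability/Sobolev part at the end is essentially the paper's argument. But the ACL part has two genuine gaps, both of which the paper's proof is structured precisely to avoid.

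First, the Egoroff uniformization is lossy in a way that matters. Egoroff produces a closed set $K'$ with $|K\setminus K'|<\eta$, but $\eta>0$: almost every line still meets $K\setminus K'$ in a nonempty (at best null) set on which you have no dilatation bound, hence no control on the oscillation of $\varphi$ there --- and a continuous non-AC function can concentrate all of its variation on a null set (Cantor function), so ``it is enough to work inside the uniform region'' is false for absolute continuity on lines. Moreover $K'$ need not contain any rectangle $Q'\times I$. Since the hypothesis holds at \emph{every} point of $\Omega$, the correct move costs nothing: exhaust each compact $F$ on a line by the increasing closed sets $F_k=\{x\in F:\ L_{\varphi}^p(x,r)r^{n-p}\leq \widetilde H_p^{\lambda}|\varphi(B(x,\lambda r))|\ \text{for all}\ r<1/k\}$, prove the estimate for each $F_k$, and pass to the limit; this is what the paper does. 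Second, your Fubini-in-$z$ step has the quantifiers in the wrong order. The bound $\int_{Q'}V_r(\varphi_z;F)^p\,dz\leq C|F|^{p-1}M$ is derived for each \emph{fixed} compact $F$, so the exceptional set of $z$ on which the pointwise conclusion fails depends on $F$; absolute continuity of $\varphi_z$ requires the estimate for \emph{all} compact $F\subset I$ simultaneously at a fixed good $z$, and there are uncountably many $F$, so Chebyshev/Fatou cannot deliver this. You name exactly this obstacle as a ``secondary subtlety'' but do not close it, and as stated it is the missing heart of the proof. The paper's fix is to reverse the order: first apply the Lebesgue differentiation theorem to the base set function $\Psi(A,P)=|\varphi(A\times I)|$ to get a single null set of bad $z\in P_0$, then fix a good $z$ with $\overline{\Psi'}(z,P)<\infty$ and run the covering argument on that one line, where the sum $\sum_i|\varphi(B(x_i,\lambda r))|$ is controlled by $\Psi(B^{n-1}(z,\lambda r),P)$ and hence, after $r\to 0$, by $\overline{\Psi'}(z,P)$; this yields $\bigl(H^1(\varphi_z(F))\bigr)^p\leq C\,(H^1(F))^{p-1}\,\overline{\Psi'}(z,P)$ for all compact $F$ at once, from which absolute continuity follows since $p>1$. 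I recommend you restructure the ACL part along these lines; the rest of your argument then goes through.
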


\begin{proof}
Fix an arbitrary cube $P$, $\overline{P}\subset \Omega$ with edges parallel to coordinate axes. We prove that $\varphi$ is absolutely continuous on almost all intersections of $P$ with lines parallel to the axis $x_n$.
Let $P_0$ be the orthogonal projection of $P$ on subspace $\{x_n=0\}=\mathbb R^{n-1}$ and $I$ be the orthogonal projection of $P$ on the axis $x_n$.
Then $P=P_0\times I$.

Since $\varphi$ is the homeomorphism then the Lebesgue measure $\Psi(E)=|\varphi(E)|$ induces by the rule $\Psi(A,P)=\Psi(A\times I)$ the monotone countable-additive function defined on measurable subsets of $P_0$.
By the Lebesgue theorem on differentiability (see, for example, \cite{RR55}) the upper $(n-1)$-dimensional volume derivative
$$
\overline{\Psi^{\prime}}(z,P)=\limsup\limits_{r\to 0}\frac{\Psi(B^{n-1}(z,r),P)}{r^{n-1}}<\infty,
$$
for almost all points $z\in P_0$. Here $B^{n-1}(z,r)$ is $(n-1)$-dimensional ball with center at $z\in P_0$ and radius $r$.

Fix a such point $z\in P_0$. Denote by $I_z=\{z\}\times I$ and $F$ be a compact subset of $I_z$.
By the condition of the theorem $F$ is a union of the following increasing sequence of closed sets
$$
F_k=\left\{x\in F : \frac{L^p_{\varphi}(x,r)r^{n-p}}{|\varphi(B(x,\lambda r))|}\leq \widetilde{H}_p^{\lambda},\,\,\,\text{for all}\,\,\, r<\frac{1}{k}\right\},
$$
with some constant $\widetilde{H}_p^{\lambda}>H_p^{\lambda}$.

Fix numbers
$k\in \mathbb N$, $\varepsilon>0$ and $t>0$. By Lemma \ref{lem:VaCovering} there exists a number
$\delta >0$
such that for any $r$, $0<r<\min(\delta,\frac{1}{k})$ there exists a collection $x_i\in F_k$, $i=1,2,...,N$, such that balls $B_i=B(x_i,r)$ cover $F_k$ and moreover each point of $F_k$ is contained in at most two balls, 
$$
Nr\leq H^1(F_k)+\varepsilon\,\,\text{and}\,\, |\varphi(x_i)-\varphi(y)|<t,\,\, y\in B(x_i,r).
$$ 

The balls $B(\varphi(x_i),L_{\varphi}(x_i,r))$ covering the image $\varphi(F_k)$.
Then, since for every ball its diameter $\diam(\varphi(B(x_i,r)))<t$, we have 
$$
H_{t}^{1}(\varphi(F_k))\leq \sum\limits_{i=1}^{N}L_{\varphi}(x_i,r).
$$
Hence, using the H\"older inequality we obtain
$$
\left(H_{t}^{1}(\varphi(F_k))\right)^p\leq N^{p-1}\sum\limits_{i=1}^{N}\left(L_{\varphi}(x_i,r)\right)^p.
$$
By the definition of the set $F_k$ we have
$$
\left(L_{\varphi}(x_i,r)\right)^p\leq
(\widetilde{H}_{p}^{\lambda})\cdot\frac{|\varphi(B(x_i,\lambda r))|}{r^{n-p}}.
$$
Hence
$$
\left(H_{t}^{1}(\varphi(F_k))\right)^p\leq
N^{p-1}\sum\limits_{i=1}^{N}\left(L_{\varphi}(x_i,r)\right)^p\\
\leq
(\widetilde{H}_{p}^{\lambda})\cdot(Nr)^{p-1}
\frac{\sum\limits_{i=1}^N |\varphi(B(x_i,\lambda r))|}{r^{n-1}}.
$$
Since any point of the set $\varphi(F_k)$ belongs no more than two sets $\varphi(B(x_i,r))$, $i=1,2,...,N$,
then
$$
\left(H_{t}^{1}(\varphi(F_k))\right)^p
\leq 2\lambda^{n-1}\left(H^1(F_k)+\varepsilon\right)^{p-1}
(\widetilde{H}_{p}^{\lambda})\cdot\frac{\Psi(B^{n-1}(z,2r),P)}{(2\lambda r)^{n-1}}.
$$
Passing to the limit while $r\to 0$, and turn to zero $\varepsilon$ and $t$ we obtain
$$
\left(H^{1}(\varphi(F_k))\right)^p
\leq 2\lambda^{n-1}(\widetilde{H}_{p}^{\lambda})\cdot (H^1(F_k))^{p-1}\Psi^{\prime}(z).
$$
Since $\varphi(F)$ is the limit of increasing sequence of compact sets $\varphi(F_k)$,
then
$$
H^{1}(\varphi(F))=\lim\limits_{k\to\infty} H^{1}(\varphi(F_k))
$$
and 
$$
\left(H^{1}(\varphi(F))\right)^p
\leq 2\lambda^{n-1}(\widetilde{H}_{p}^{\lambda})\cdot (H^1(F))^{p-1}\Psi^{\prime}(z)
$$
for almost all $z\in P_0$.  Hence $\varphi\in \ACL(\Omega)$.

Now we prove that $\varphi$ is differentiable almost everywhere in $\Omega$.
For all $r<\varepsilon(x)$ the inequality
$$
\biggl(\frac{L_{\varphi}(x,r)}{r}\biggr)^{p}\leq
(\widetilde{H}_{p}^{\lambda}) \frac{|\varphi(B(x,\lambda r))|}{r^n}
$$
holds with some constant $(\widetilde{H}_{p}^{\lambda})>{H}_{p}^{\lambda}$. Passing to the limit while $r\to 0$,
we obtain
$$
\limsup\limits_{r\to 0}\biggl(\frac{L_{\varphi}(x,r)}{r}\biggr)^{p}\leq \omega_n\lambda^n(\widetilde{H}_{p}^{\lambda}){\Psi}^{\prime}(x)<\infty,
$$
for almost all $x\in\Omega$. By the Stepanov theorem \cite{Fe69} we obtain that $\varphi$ is differentiable almost everywhere in $\Omega$.
Since ${\Psi}^{\prime}\in L_{1,\loc}(\Omega)$, then $|D\varphi|\in L_{p,\loc}(\Omega)$ and so $\varphi\in W^1_{p,\loc}(\Omega)$ \cite{M}.
\end{proof}

From Theorem~\ref{thm:ACLPPconf} follows

\begin{thm}
\label{thm:pp} Let $\varphi :\Omega\to \widetilde{\Omega}$ be a homeomorphism satisfy
$$
\limsup\limits_{r\to 0} H_{p}^{\lambda}(x,r)\leq H_p^{\lambda}<\infty, \,\,\text{for all}\,\,x\in\Omega,\,\,1<p<\infty.
$$
Then $\varphi$ generates a bounded composition operator 
$$
\varphi^{\ast}: L^1_p(\widetilde{\Omega})\to L^1_p(\Omega).
$$
\end{thm}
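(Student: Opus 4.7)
The plan is to reduce the claim to Theorem~\ref{CompTh} in the diagonal case $q=p$ (so $\kappa=\infty$). For that I must verify that $\varphi\in W^1_{p,\loc}(\Omega)$, that $\varphi$ has finite distortion, and that
$$
K_{p,p}(\varphi;\Omega)=\ess\sup_{x\in\Omega}\frac{|D\varphi(x)|}{|J(x,\varphi)|^{1/p}}<\infty.
$$
The first property is handed to me directly by Theorem~\ref{thm:ACLPPconf}, which also yields almost everywhere differentiability of $\varphi$ and, as an intermediate output of its proof, the pointwise estimate
$$
\limsup_{r\to 0}\left(\frac{L_\varphi(x,r)}{r}\right)^{p}\leq \omega_n\lambda^n\,\widetilde{H}_p^{\lambda}\,\Psi'(x)\qquad\text{for a.e. }x\in\Omega,
$$
where $\Psi(E)=|\varphi(E)|$ and $\Psi'$ is its volume derivative. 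At a differentiability point the left-hand side is exactly $|D\varphi(x)|^{p}$, so the remaining task collapses to identifying $\Psi'(x)$ with $|J(x,\varphi)|$ almost everywhere.

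I would handle this identification by a purely local linearization. Using the Taylor-type expansion
$$
\varphi(y)=\varphi(x)+D\varphi(x)(y-x)+o(|y-x|),
$$
two cases arise. If $J(x,\varphi)\neq 0$, then $D\varphi(x)$ is invertible and a direct comparison of $\varphi(B(x,r))$ with its affine image gives $|\varphi(B(x,r))|/|B(x,r)|\to|J(x,\varphi)|$. If $J(x,\varphi)=0$, the image of $B(x,r)$ is contained in an $o(r)$-neighborhood of a subspace of dimension less than $n$, so elementary geometry bounds its measure by $o(r^n)$, and $\Psi'(x)=0=|J(x,\varphi)|$.

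Combining these ingredients produces the pointwise distortion bound
$$
|D\varphi(x)|^{p}\leq \omega_n\lambda^n\,\widetilde{H}_p^{\lambda}\,|J(x,\varphi)|\qquad\text{for a.e. }x\in\Omega,
$$
which simultaneously delivers the finite distortion condition (the inequality forces $D\varphi(x)=0$ whenever $J(x,\varphi)=0$) and the essential uniform bound $K_p(x)\leq (\omega_n\lambda^n\widetilde{H}_p^{\lambda})^{1/p}$. Theorem~\ref{CompTh} then concludes the proof. The subtlest step I anticipate is the identification $\Psi'(x)=|J(x,\varphi)|$: the inequality $|J(x,\varphi)|\leq\Psi'(x)$ holds for Sobolev homeomorphisms by the area formula, but the reverse direction can fail globally below $W^{1,n}$ when Luzin's condition $(N)$ is absent, so I rely on the pointwise linearization at differentiability points rather than on any global regularity property.
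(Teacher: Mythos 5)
Your proposal is correct and follows essentially the same route as the paper: invoke Theorem~\ref{thm:ACLPPconf} for $W^1_{p,\loc}$-regularity and a.e.\ differentiability, pass to the limit $r\to 0$ in the dilatation bound to get $|D\varphi(x)|^p\leq \omega_n\lambda^n H_p^{\lambda}|J(x,\varphi)|$ a.e., and conclude via Theorem~\ref{CompTh}. The only difference is that you spell out the identification of the volume derivative $\Psi'(x)$ with $|J(x,\varphi)|$ at differentiability points (including the degenerate case $J(x,\varphi)=0$), a step the paper states without elaboration.
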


\begin{proof}
By Theorem~\ref{thm:ACLPPconf} the homeomorphism $\varphi$ belongs to the space $W^1_{p,\loc}(\Omega)$ and is differentiable almost everywhere in $\Omega$. Hence
$$
\lim\limits_{r\to 0}\frac{L_{\varphi}^{p}(x,r)}{r^p}=|D\varphi(x)|^p,\,\,\,\text{for almost all}\,\,\,x\in\Omega,
$$
and
$$
\lim\limits_{r\to 0}\frac{|\varphi(B(x,\lambda r))|}{r^n}
=\omega_n\lambda^n|J(x,\varphi)|,\,\,\,\text{for almost all}\,\,\,x\in\Omega.
$$
Hence
$$
|D\varphi(x)|^p\leq \omega_n\lambda^n H_p^{\lambda} |J(x,\varphi)| \,\,\,\text{a.~e. in}\,\,\, \Omega.
$$
Therefore by Theorem~\ref{CompTh}, we have that $\varphi$ generates a bounded composition operator 
$$
\varphi^{\ast}: L^1_p(\widetilde{\Omega})\to L^1_p(\Omega).
$$
\end{proof}

The inverse assertion is correct only under additional assumptions on $p$. Remark that in the case $n<p<\infty$ we can take $\lambda=1$.

\begin{thm}
\label{thm:pp-1}
Let a homeomorphism $\varphi :\Omega\to \widetilde{\Omega}$
generates a bounded composition operator 
$$
\varphi^{\ast}: L^1_p(\widetilde{\Omega})\to L^1_p(\Omega),\,\, n<p< \infty.
$$
Then there exists a constant $H_p^{1}<\infty$ such that
$$
\limsup\limits_{r\to 0} H_{p}^{1}(x_0,r)=\limsup\limits_{r\to 0}\frac{L_{\varphi}^{p}(x_0,r)r^{n-p}}{|\varphi(B(x_0,r))|}\leq H_p^{1}<\infty, \,\,\text{for all}\,\,x_0\in\Omega.
$$
\end{thm}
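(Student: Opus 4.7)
The plan is to combine the analytic characterization from Theorem \ref{CompTh} with the Morrey--Sobolev embedding, which is effective precisely in the supercritical regime $p > n$ and is what permits the choice $\lambda = 1$.

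By Theorem \ref{CompTh} applied with $p = q$, the boundedness of $\varphi^\ast$ gives $\varphi \in W^1_{p,\loc}(\Omega)$ together with the pointwise distortion estimate
$$|D\varphi(x)|^p \leq K^p\,|J(x,\varphi)| \quad \text{for a.e. } x \in \Omega,$$
where $K := K_{p,p}(\varphi;\Omega) < \infty$; in particular $\varphi$ is a mapping of finite distortion.

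Next, since $p > n$, the Morrey--Sobolev embedding $W^1_p(B(x_0,r)) \hookrightarrow C^{0,1-n/p}(\overline{B(x_0,r)})$ applied coordinatewise to $\varphi$ yields, for every $y$ with $|y-x_0|=r$ and every $r>0$ with $\overline{B(x_0,r)} \subset \Omega$,
$$|\varphi(y) - \varphi(x_0)|^p \leq C(n,p)\, r^{p-n} \int_{B(x_0,r)} |D\varphi(z)|^p\, dz.$$
Taking the maximum over $|y - x_0| = r$ and substituting the distortion inequality, I obtain
$$L_{\varphi}^p(x_0, r) \leq C(n,p)\, K^p\, r^{p-n} \int_{B(x_0,r)} |J(z,\varphi)|\, dz.$$
Finally, the change-of-variables inequality $\int_E |J(z,\varphi)|\,dz \leq |\varphi(E)|$, valid for homeomorphic Sobolev mappings of finite distortion, yields
$$\frac{L_{\varphi}^p(x_0, r)\, r^{n-p}}{|\varphi(B(x_0,r))|} \leq C(n,p)\, K^p$$
for every $x_0 \in \Omega$ and every sufficiently small $r > 0$; the $\limsup$ assertion follows at once with $H_p^1 := C(n,p)\, K^p$.

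The only nontrivial external ingredient is the change-of-variables inequality in the last step; the Morrey--Sobolev embedding is elementary, and it is precisely Morrey's inequality that makes the choice $\lambda = 1$ possible. For $p \leq n$ the oscillation of $\varphi$ on the ball $B(x_0, r)$ cannot be controlled by integration over the same ball, which forces the proof to pass to a dilated ball $B(x_0, \lambda r)$ with $\lambda > 1$ and explains the structural difference between this theorem and the sub-Morrey cases treated elsewhere in the paper.
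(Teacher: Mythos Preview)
Your proof is correct and takes a genuinely different route from the paper. The paper argues via capacities: it chooses the condenser $(F_0,F_1)$ with $F_0=\widetilde{\Omega}\setminus\varphi(B(x_0,r))$ and $F_1=\{\varphi(x_0)\}$, invokes the capacitory inequality $\cp_p^{1/p}(\varphi^{-1}(F_0),\varphi^{-1}(F_1);\Omega)\leq K_{p,p}\cp_p^{1/p}(F_0,F_1;\widetilde{\Omega})$, and then plugs in the exact value $\cp_p(\{x_0\},\Omega\setminus B(x_0,r);\Omega)=c(n,p)r^{n-p}$ (positive precisely because $p>n$) together with the upper bound $\cp_p(F_0,F_1;\widetilde{\Omega})\leq |\varphi(B(x_0,r))|/L_\varphi^p(x_0,r)$. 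You instead go straight from the analytic characterization of Theorem~\ref{CompTh} through Morrey's inequality and the change-of-variables inequality $\int_E|J(\cdot,\varphi)|\leq|\varphi(E)|$ for Sobolev homeomorphisms. Your argument is more self-contained and avoids the capacity machinery entirely; the paper's argument has the virtue of being uniform with the proofs of the companion theorems (the $n-1<p<n$ case and the $(p,q)$ cases), all of which are capacity-based, and it makes the role of $p>n$ visible as ``points have positive $p$-capacity'' rather than ``Morrey applies.'' Both explanations of why $\lambda=1$ suffices are equivalent manifestations of the supercritical regime.
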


\begin{proof} Fix a point $x_0\in\Omega$ and $r>0$ such that $B(x_0,2r)\subset\Omega$. In the domain $\widetilde{\Omega}$ we consider a condenser $(F_0,F_1)\subset \widetilde{\Omega}$, where
$$
F_0=\widetilde{\Omega}\setminus \varphi(B(x_0,r)), \,\,F_1=\{\varphi(x_0)\}.
$$
Since the homeomorphism $\varphi :\Omega\to \widetilde{\Omega}$ generates a bounded composition operator 
$$
\varphi^{\ast}: L^1_p(\widetilde{\Omega})\to L^1_p(\Omega),
$$
then by \cite{GGR95,U93}
$$
\cp_{p}^{\frac{1}{p}}(\varphi^{-1}(F_0),\varphi^{-1}(F_1);\Omega) \leq K_{p,p}(\varphi;\Omega)\cp_{p}^{\frac{1}{p}}(F_0,F_1;\widetilde{\Omega}).
$$
By capacity estimates \cite{GResh}
\begin{multline*}
c(n,p)r^{n-p}=\cp_{p}(\varphi^{-1}(F_0),\varphi^{-1}(F_1);\Omega) \\
\leq K^p_{p,p}(\varphi;\Omega)\cp_{p}(F_0,F_1;\widetilde{\Omega})\leq K^p_{p,p}(\varphi;\Omega) \frac{|\varphi(B(x_0,r)|}{L^p_{\varphi}(x_0,r)}.
\end{multline*}

Hence
$$
\frac{L^p_{\varphi}(x_0,r) r^{n-p}}{|\varphi(B(x_0,r)|}\leq \frac{K^p_{p,p}(\varphi;\Omega)}{c(n,p)}.
$$
Setting $H_p^{1}:=c^{-1}(n,p)\cdot K^p_{p,p}(\varphi;\Omega)$ we obtain
$$
\limsup\limits_{r\to 0} H_{p}^{1}(x_0,r)=\limsup\limits_{r\to 0}\frac{L_{\varphi}^{p}(x_0,r)r^{n-p}}{|\varphi(B(x_0,r))|}\leq H_p^{1}<\infty, \,\,\text{for all}\,\,x_0\in\Omega.
$$
\end{proof}

In the case $n-1<p<n$ we use the Teichm\"uller type capacity estimates and so we should take $\lambda>1$.

\begin{thm}
\label{thm:pp-lambda}
Let a homeomorphism $\varphi :\Omega\to \widetilde{\Omega}$
generates a bounded composition operator 
$$
\varphi^{\ast}: L^1_p(\widetilde{\Omega})\to L^1_p(\Omega),\,\, n-1<p<n.
$$
Then there exists a constant $H_p^{\lambda}<\infty$, $\lambda>1$, such that
$$
\limsup\limits_{r\to 0} H_{p}^{\lambda}(x_0,r)=\limsup\limits_{r\to 0}\frac{L_{\varphi}^{p}(x_0,r)r^{n-p}}{|\varphi(B(x_0,\lambda r))|}\leq H_p^{\lambda}<\infty, \,\,\text{for all}\,\,x_0\in\Omega.
$$
\end{thm}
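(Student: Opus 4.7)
The plan is to follow the scheme of Theorem~\ref{thm:pp-1} with one crucial modification forced by the range $n-1<p<n$: because singletons carry zero $p$-capacity when $p<n$, the inner plate $\{\varphi(x_0)\}$ used in that proof must be replaced by the image of a one-dimensional continuum. The restriction $p>n-1$ is precisely what keeps such a continuum of positive $p$-capacity, since for $p>n-1$ a set of positive Hausdorff $1$-measure has positive $p$-capacity. The price to pay is that one is no longer estimating a point-to-complement condenser but a Teichm\"uller-type ring, which is what forces the introduction of the inflation factor $\lambda>1$.

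Concretely I would fix $x_0\in\Omega$, choose $\lambda>1$ (any value will do; $\lambda=2$ is convenient), and pick $r>0$ small enough that $\overline{B(x_0,\lambda r)}\subset\Omega$. Selecting $y_r\in\partial B(x_0,r)$ with $|\varphi(y_r)-\varphi(x_0)|=L_\varphi(x_0,r)$, setting $I_r=[x_0,y_r]$, and forming the condenser in $\widetilde{\Omega}$
\[
F_0=\widetilde{\Omega}\setminus\varphi(B(x_0,\lambda r)),\qquad F_1=\varphi(I_r),
\]
I note that its $\varphi$-preimage is the condenser $(\Omega\setminus B(x_0,\lambda r),\,I_r)$ in $\Omega$. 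I would then establish two Teichm\"uller-type capacity estimates: a lower bound
\[
\cp_{p}(\Omega\setminus B(x_0,\lambda r),\,I_r;\,\Omega)\geq c_1(n,p,\lambda)\,r^{n-p},
\]
which follows by scaling from the positivity of the $p$-capacity of a unit segment inside the concentric ball of radius $\lambda$ (this is exactly where $p>n-1$ enters), and an upper bound
\[
\cp_{p}(F_0,F_1;\widetilde{\Omega})\leq c_2(n,p,\lambda)\,\frac{|\varphi(B(x_0,\lambda r))|}{L_\varphi^{p}(x_0,r)},
\]
obtained from a cut-off adapted to the ring $\varphi(B(x_0,\lambda r))\setminus F_1$ that exploits $\diam(F_1)\geq L_\varphi(x_0,r)$.

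Feeding both into the capacitory inequality
\[
\cp_{p}(\varphi^{-1}(F_0),\varphi^{-1}(F_1);\Omega)\leq K_{p,p}^{p}(\varphi;\Omega)\,\cp_{p}(F_0,F_1;\widetilde{\Omega})
\]
cited from \cite{GGR95,U93} in the proof of Theorem~\ref{thm:pp-1} and rearranging yields
\[
\frac{L_\varphi^{p}(x_0,r)\,r^{n-p}}{|\varphi(B(x_0,\lambda r))|}\leq \frac{c_2(n,p,\lambda)}{c_1(n,p,\lambda)}\,K_{p,p}^{p}(\varphi;\Omega)
\]
uniformly in small $r$, and the conclusion follows on setting $H_p^{\lambda}:=(c_2/c_1)\,K_{p,p}^{p}(\varphi;\Omega)$ and passing to $\limsup_{r\to 0}$.

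The main obstacle is the upper Teichm\"uller estimate on the image side: the naive cut-off $u(y)=\max\{0,1-\mathrm{dist}(y,F_1)/L_\varphi(x_0,r)\}$ need not vanish on $F_0$ without an a priori comparison between the quantities $\ell_\varphi(x_0,\lambda r)=\min_{|y-x_0|=\lambda r}|\varphi(y)-\varphi(x_0)|$ and $L_\varphi(x_0,r)$, which is not available. One must instead build the admissible function from the intrinsic geometry of $\varphi$ on the ring $B(x_0,\lambda r)\setminus B(x_0,r)$, for instance by pushing forward a radial cut-off supported in that ring through $\varphi$ and then controlling the $L_p$-energy via a change of variables together with the definition of $L_\varphi$. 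This is the delicate step and is exactly why $\lambda$ must be taken strictly larger than one, with the constant $c_2$ degenerating as $\lambda\downarrow 1$.
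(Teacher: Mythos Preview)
Your overall framework---capacitory inequality plus a lower bound on the domain side and an upper bound on the image side---is exactly the paper's. The gap is the one you yourself flag, and it is not merely ``delicate'': with your choice of condenser the image-side upper bound simply cannot be established.

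You take $F_1=\varphi(I_r)$ with $I_r=[x_0,y_r]\subset\Omega$ and $F_0=\widetilde\Omega\setminus\varphi(B(x_0,\lambda r))$. This makes the domain-side Teichm\"uller lower bound clean, but the inequality
\[
\cp_p(F_0,F_1;\widetilde\Omega)\le c_2\,\frac{|\varphi(B(x_0,\lambda r))|}{L_\varphi^{p}(x_0,r)}
\]
would require an admissible function whose gradient is $O(1/L_\varphi(x_0,r))$, hence a transition region of width comparable to $L_\varphi(x_0,r)$ between $\varphi(I_r)$ and $\partial\varphi(B(x_0,\lambda r))$. With no a~priori distortion control on $\varphi$, the distance $\operatorname{dist}\bigl(\varphi(I_r),\partial\varphi(B(x_0,\lambda r))\bigr)$ can be arbitrarily small compared with $L_\varphi(x_0,r)$, so no such test function exists. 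Your proposed remedy---pushing a radial cut-off through $\varphi$---requires integrability of $|D\varphi^{-1}|^p$, which is precisely what is not assumed; the change of variables runs in the wrong direction.

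The paper sidesteps this by reversing the roles: it builds the plates directly in the \emph{image} as spherical sets. With $y_0=\varphi(x_0)$, $y_1\in\varphi(S(x_0,r))$ realizing $L_\varphi(x_0,r)$, and $y_2$ the second intersection of the line $\overline{y_0y_1}$ with $\varphi(S(x_0,r))$, one sets
\[
F_0=\overline{B(y_2,|y_2-y_0|)}\cap\varphi(B(x_0,\lambda r)),\qquad
F_1=\bigl\{|y-y_2|\ge |y_2-y_1|\bigr\}\cap\varphi(B(x_0,\lambda r)).
\]
Because the plates lie inside a genuine spherical ring of width $|y_2-y_1|-|y_2-y_0|=L_\varphi(x_0,r)$, the radial cut-off $u(y)=\min\{1,\max\{0,(|y-y_2|-|y_2-y_0|)/L_\varphi(x_0,r)\}\}$ is admissible with $|\nabla u|\le 1/L_\varphi(x_0,r)$, and the upper bound $|\varphi(B(x_0,\lambda r))|/L_\varphi^p(x_0,r)$ drops out immediately. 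The cost is that the preimage condenser $(\varphi^{-1}(F_0),\varphi^{-1}(F_1))$ is no longer explicit; but $\varphi^{-1}(F_0)$ is a continuum joining $x_0$ to $\varphi^{-1}(y_2)\in S(x_0,r)$ and $\varphi^{-1}(F_1)$ is a continuum containing $\varphi^{-1}(y_1)\in S(x_0,r)$ and reaching toward $S(x_0,\lambda r)$, so the Teichm\"uller-type lower bound $c(n,p,\lambda)\,r^{n-p}$ from \cite{GResh,Vu} for $p>n-1$ still applies. The key idea you are missing is to place the spherical geometry on the image side, where it makes the upper bound automatic, and let the Teichm\"uller estimate absorb the resulting lack of explicitness on the domain side.
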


\begin{proof} Fix a point $x_0\in\Omega$ and $r>0$ such that $B(x_0,2 \lambda r)\subset\Omega$. Denote by $y_0:=\varphi(x_0)$.
Let a point $y_1\in f(S(x_0,r))$ is chosen such that $L_{\varphi}(x_0,r)=\rho(y_0,y_1)$.
By symbol $y_2$ we denote the second point of the intersection of the line, passing  through
points $y_1$ and $y_0$, with the set $f(S(x_0,r))$.
In the domain $\widetilde{\Omega}$ we consider continuums
\begin{align}
F_0&=\{y\in \widetilde{\Omega}: |y-y_2|\leq |y_2-y_0|\}
\cap f(B(x_0,\lambda r)),
\nonumber
\\
F_1&=\{y\in \widetilde{\Omega}: |y-y_2|\geq |y_2-y_1|\}
\cap f(B(x_0,\lambda r)).
\nonumber
\end{align}
Since $\varphi$ generates a bounded composition operator 
$$
\varphi^{\ast}: L^1_p(\widetilde{\Omega})\to L^1_p(\Omega),
$$
then by \cite{GGR95,U93}
$$
\cp_{p}^{\frac{1}{p}}(\varphi^{-1}(F_0),\varphi^{-1}(F_1);\Omega) \leq K_{p,p}(\varphi;\Omega)\cp_{p}^{\frac{1}{p}}(F_0,F_1;\widetilde{\Omega}).
$$
By capacity estimates \cite{GResh,Vu}
\begin{multline}
c(n,p,\lambda)r^{n-p}\leq
{\cp}_{p}(\varphi^{-1}(F_0),\varphi^{-1}(F_1);\Omega)\\
\leq K^p_{p,p}(\varphi;\Omega)\cp_{p}(F_0,F_1;\widetilde{\Omega})\leq K^p_{p,p}(\varphi;\Omega) \frac{|\varphi(B(x_0,\lambda r)|}{L^p_{\varphi}(x_0,r)}.
\nonumber
\end{multline}
Hence
$$
\frac {L_{\varphi}^{p}(x_0,r)r^{n-p}}{|\varphi(B(x_0,\lambda r))|}\leq \frac{K^p_{p,p}(\varphi;\Omega)}{c(n,p,\lambda)}.
$$
Setting $H_p^{\lambda}:=c^{-1}(n,p,\lambda)\cdot K^p_{p,p}(\varphi;\Omega)$ and passing the the limit while $r\to 0$, we obtain that
$$
\limsup\limits_{r\to 0} H_{p}^{\lambda}(x_0,r)=\limsup\limits_{r\to 0}\frac{L_{\varphi}^{p}(x_0,r)r^{n-p}}{|\varphi(B(x_0,\lambda r))|}\leq H_p^{\lambda}<\infty, \,\,\text{for all}\,\,x_0\in\Omega.
$$
\end{proof}

\subsection{Weak $(p,q)$-quasiconformal mappings}

Let us recall the notion of the set function $\widetilde{\Phi}_{p,q}(\widetilde A)$, defined on open bounded subsets $\widetilde A\subset\widetilde{\Omega}$ and associated with the composition operator $\varphi^\ast: L^1_p(\widetilde{\Omega})\to L^1_{q}(\Omega)$, $1 < q < p < \infty$:

\begin{equation}\label{setfunc}
\widetilde{\Phi}_{p,q}(\widetilde{A})=\sup\limits_{f\in L^1_p(\widetilde{A})\cap C_0(\widetilde{A})}\left(\frac{\|\varphi^{\ast}(f)\mid L^1_q(\Omega)\|}{\|f\mid L^1_p(\widetilde{A})\|}\right)^{\kappa}, \quad 1/{\kappa}=1/q-1/p.
\end{equation}

\begin{thm}\cite{U93}
\label{thmsetfunc}
\label{CompPhi} Let a homeomorphism $\varphi:\Omega\to\widetilde{\Omega}$
between two domains $\Omega$ and $\widetilde{\Omega}$ generates a bounded composition
operator 
\[
\varphi^{\ast}:L^1_p(\widetilde{\Omega})\to L^1_q(\Omega),\,\,\,1\leq q< p\leq\infty.
\]
Then the function $\widetilde{\Phi}_{p,q}(\widetilde{A})$, defined by (\ref{setfunc}), is a bounded monotone countably additive set function defined on open bounded subsets $\widetilde{A}\subset\widetilde{\Omega}$.
\end{thm}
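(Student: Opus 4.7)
The plan is to verify the three declared properties of $\widetilde{\Phi}_{p,q}$---boundedness, monotonicity, and countable additivity---in turn, with the last being the substantive part. Boundedness and monotonicity are immediate: any admissible $f\in L^1_p(\widetilde A)\cap C_0(\widetilde A)$ extends by zero to $\widetilde\Omega$ with unchanged seminorm, so the boundedness of $\varphi^{\ast}$ gives $\|\varphi^{\ast}(f)\mid L^1_q(\Omega)\|\leq K_{p,q}(\varphi;\Omega)\|f\mid L^1_p(\widetilde A)\|$, yielding the uniform estimate $\widetilde\Phi_{p,q}(\widetilde A)\leq K_{p,q}(\varphi;\Omega)^{\kappa}$; the same zero extension makes any test function for a smaller open subset admissible on a larger one, which is monotonicity.

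The key tool for countable additivity is the following consequence of H\"older's inequality. Since $\kappa=pq/(p-q)$, the exponents $p/(p-q)$ and $p/q$ are conjugate, and for positive reals $a_i,b_i$ we have
$$\frac{\left(\sum_i a_i^q b_i^q\right)^{p/(p-q)}}{\left(\sum_i b_i^p\right)^{q/(p-q)}}\leq \sum_i a_i^{\kappa},$$
with equality precisely when $b_i^p$ is proportional to $a_i^{\kappa}$. Both halves of countable additivity reduce to assembling or disassembling test functions and invoking this estimate with $a_i=\|\varphi^{\ast}(f_i)\mid L^1_q(\Omega)\|/\|f_i\mid L^1_p(\widetilde A_i)\|$ and $b_i=\|f_i\mid L^1_p(\widetilde A_i)\|$.

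For subadditivity, let $\widetilde A_1,\widetilde A_2,\ldots$ be a disjoint family of open bounded subsets with open bounded union $\widetilde A$, and fix an admissible $f$ on $\widetilde A$. Its compact support meets only finitely many $\widetilde A_i$, and each intersection $\mathrm{supp}\,f\cap\widetilde A_i$ is simultaneously open and closed in $\mathrm{supp}\,f$, hence compact in $\widetilde A_i$. The pieces $f_i:=f\chi_{\widetilde A_i}$ therefore lie in $L^1_p(\widetilde A_i)\cap C_0(\widetilde A_i)$, and disjointness of supports (preserved by the homeomorphism $\varphi$) splits both the $L^1_p$ and $L^1_q$ seminorms of $f$ and $\varphi^{\ast}(f)$ into sums of $p$-th and $q$-th powers respectively. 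The displayed H\"older inequality then bounds the ratio appearing in the definition of $\widetilde\Phi_{p,q}(\widetilde A)$ by $\sum_i a_i^{\kappa}\leq \sum_i \widetilde\Phi_{p,q}(\widetilde A_i)$, giving the subadditivity direction.

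For superadditivity, fix $\varepsilon>0$, $N\in\mathbb{N}$, and admissible $f_i\in L^1_p(\widetilde A_i)\cap C_0(\widetilde A_i)$ with $a_i^{\kappa}\geq \widetilde\Phi_{p,q}(\widetilde A_i)-\varepsilon/N$ for $i=1,\dots,N$. Form $f:=\sum_{i=1}^N \alpha_i f_i$ with scalings $\alpha_i>0$ chosen so that $(\alpha_i b_i)^p$ is proportional to $a_i^{\kappa}$; since the sum is finite and the $f_i$ are supported in disjoint $\widetilde A_i$, one has $f\in L^1_p(\widetilde A)\cap C_0(\widetilde A)$. By the equality case of the H\"older inequality above, the ratio for $f$ equals exactly $\sum_{i=1}^N a_i^{\kappa}\geq \sum_{i=1}^N\widetilde\Phi_{p,q}(\widetilde A_i)-\varepsilon$, so $\widetilde\Phi_{p,q}(\widetilde A)\geq\sum_{i=1}^N\widetilde\Phi_{p,q}(\widetilde A_i)-\varepsilon$, and letting $\varepsilon\to 0$ and then $N\to\infty$ completes the proof. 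The main obstacle is locating and saturating this particular H\"older inequality; once it is in hand the role of $\varphi$ is purely bookkeeping---preservation of disjointness under $\varphi^{-1}$---and both halves of additivity become parallel and short.
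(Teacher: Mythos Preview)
The paper states this theorem with a citation to \cite{U93} and gives no proof of its own, so there is nothing in the present paper to compare against; your argument is correct and is in fact the standard one from the cited literature (boundedness and monotonicity by zero extension, countable additivity by splitting/gluing test functions supported in the disjoint pieces and invoking the H\"older inequality between the $\ell^q$ sum of the numerators and the $\ell^p$ sum of the denominators, with the extremal scaling for the reverse inequality). One cosmetic point: in the superadditivity step you should explicitly discard indices with $\widetilde\Phi_{p,q}(\widetilde A_i)=0$ and, for the remaining ones, choose $f_i$ with strictly positive ratio $a_i$ so that the scalings $\alpha_i$ are well defined; this is harmless but worth saying.
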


Recall that a nonnegative mapping $\Phi$ defined on open subsets of $\Omega$ is called a monotone countably additive set function \cite{RR55,VU04} if

\noindent
1) $\Phi(U_1)\leq \Phi(U_2)$ if $U_1\subset U_2\subset\Omega$;

\noindent
2)  for any collection $U_i \subset U \subset \Omega$, $i=1,2,...$, of mutually disjoint open sets
$$
\sum_{i=1}^{\infty}\Phi(U_i) = \Phi\left(\bigcup_{i=1}^{\infty}U_i\right).
$$

The following lemma gives properties of monotone countably additive set functions defined on open subsets of $\Omega\subset \mathbb R^n$ \cite{RR55,VU04}.

\begin{lem}
\label{lem:AddFun}
Let $\Phi$ be a monotone countably additive set function defined on open subsets of the domain $\Omega\subset \mathbb R^n$. Then

\noindent
(a) at almost all points $x\in \Omega$ there exists a finite derivative
$$
\lim\limits_{r\to 0}\frac{\Phi(B(x,r))}{|B(x,r)|}=\Phi'(x);
$$

\noindent
(b) $\Phi'(x)$ is a measurable function;

\noindent
(c) for every open set $U\subset \Omega$ the inequality
$$
\int\limits_U\Phi'(x)~dx\leq \Phi(U)
$$
holds.
\end{lem}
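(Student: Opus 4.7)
The plan is to treat $\Phi$ as an abstract Borel set function on $\Omega$ and apply Vitali-type differentiation, following the classical scheme of \cite{RR55}. The only input data are monotonicity, countable additivity on open sets, and finiteness on relatively compact subdomains (which, for fixed $U \Subset V \Subset \Omega$, follows from countable additivity on an exhausting sequence of open sets). No extension of $\Phi$ to a Borel measure is needed; everything can be run directly on balls.

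For part (a), I define the upper and lower derivates
\[
\overline{D}\Phi(x)=\limsup_{r\to 0}\frac{\Phi(B(x,r))}{|B(x,r)|},\qquad \underline{D}\Phi(x)=\liminf_{r\to 0}\frac{\Phi(B(x,r))}{|B(x,r)|},
\]
and fix an open $U\Subset V\Subset\Omega$. First I establish the weak-type bound
\[
\bigl|\{x\in U:\overline{D}\Phi(x)>t\}\bigr|\leq \frac{c_n\,\Phi(V)}{t}
\]
by covering the super-level set with balls $B(x,r)\subset V$ satisfying $\Phi(B(x,r))>t\,|B(x,r)|$, then extracting a disjoint Vitali subfamily whose $5r$-enlargements still cover, and summing with the aid of monotonicity and countable additivity of $\Phi$. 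Letting $t\to\infty$ gives $\overline{D}\Phi<\infty$ a.e. For the existence of the limit, I show that for every pair of rationals $0<a<b$ the set $\{x\in U:\underline{D}\Phi(x)<a<b<\overline{D}\Phi(x)\}$ has Lebesgue measure zero, via a double Vitali argument: first cover it by balls on which $\Phi(B(x,r))<a\,|B(x,r)|$, then inside these cover the set again by balls on which $\Phi(B(x,r))>b\,|B(x,r)|$, and compare the two totals using countable additivity. A countable union over rationals yields $\overline{D}\Phi=\underline{D}\Phi$ almost everywhere, hence a finite derivative $\Phi'(x)$.

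For (b), I observe that for each fixed $r>0$ the map $x\mapsto\Phi(B(x,r))$ is lower semicontinuous: if $x_k\to x$, then $B(x,r-\varepsilon)\subset B(x_k,r)$ eventually, so monotonicity yields $\Phi(B(x,r-\varepsilon))\leq\liminf_k\Phi(B(x_k,r))$, and sending $\varepsilon\to 0$ along a sequence (using countable additivity on the increasing union of concentric balls) concludes. Consequently each quotient $\Phi(B(x,r_j))/|B(x,r_j)|$ with $r_j\to 0$ is measurable, and $\Phi'$ is an almost-everywhere limit of measurable functions. For (c), I fix an open $U\subset\Omega$ and $\varepsilon>0$. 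At a.e.\ $x\in U$ the inequality $\Phi(B(x,r))\geq(\Phi'(x)-\varepsilon)|B(x,r)|$ holds for all sufficiently small $r$ with $B(x,r)\subset U$. A Vitali covering of $U$ modulo a null set by countably many pairwise disjoint such balls, together with countable additivity of $\Phi$, gives
\[
\int_U (\Phi'(x)-\varepsilon)\,dx\leq \Phi(U),
\]
and letting $\varepsilon\to 0$ finishes the proof.

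The main obstacle is the almost-everywhere equality $\overline{D}\Phi=\underline{D}\Phi$ in part (a): the weak-type bound supplies only finiteness of $\overline{D}\Phi$, not existence of the limit. For a general monotone countably additive set function, which need not be absolutely continuous with respect to Lebesgue measure, this requires the full double Vitali argument controlling both derivates simultaneously on each pair of rational levels, and is exactly the core of the classical Radó--Reichelderfer differentiation theorem. Once (a) is in hand, (b) and (c) are short and essentially formal.
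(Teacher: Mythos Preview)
The paper does not prove this lemma: it is stated with citations to \cite{RR55,VU04} and then used as a black box, so there is no in-paper argument to compare against. Your sketch is precisely the classical Vitali differentiation scheme that those references carry out, and parts (a) and (c) are laid out correctly.

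One small point on (b): your step ``sending $\varepsilon\to 0$ along a sequence (using countable additivity on the increasing union of concentric balls)'' implicitly uses continuity from below, i.e.\ $\Phi(B(x,r-\varepsilon))\uparrow\Phi(B(x,r))$. This does \emph{not} follow directly from countable additivity on pairwise disjoint open sets, since the nested balls are not disjoint and an open ball cannot be written as a countable disjoint union of smaller open balls and open annuli without leaving out spheres. In the cited references this is handled either by first extending $\Phi$ to a Borel (outer) measure, or by observing that measurability of $\Phi'$ does not require lower semicontinuity of $x\mapsto\Phi(B(x,r))$ at all: since the limit exists a.e., one may realize $\Phi'$ as an a.e.\ limit along a fixed sequence $r_j\downarrow 0$, and for each fixed $r$ the function $x\mapsto\Phi(B(x,r))$ is the increasing limit of the lower semicontinuous functions $x\mapsto\Phi(B(x,s))$, $s<r$, hence Borel. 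Either route closes the gap; your overall plan is sound.
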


Let $\varphi :\Omega\to \widetilde{\Omega}$ be a homeomorphism. Follow \cite{U94} we introduce the geometric $(p,q)$-dilatation
$$
H_{p,q}^{\lambda}(x,r;\Phi_{p,q})=\frac{L_{\varphi}^{p}(x,r)r^{n-p}}{|\varphi(B(x,\lambda r))|}\biggl(\frac{|B(x,r)|}{{\Phi_{p,q}}(B(x,\lambda r))}\biggr)^{\frac{p-q}{q}},\,\, \lambda\geq 1,
$$
where $L_{\varphi}(x,r)=\max\limits_{|x-y|=r}|\varphi(x)-\varphi(y)|$ and $\Phi_{p,q}$ a bounded monotone countable-additive absolutely continuous set function defined on open subsets of $\Omega$.

Remark that geometric characteristics of mappings with an integrable quasiconformal distortion were considered in another terms in \cite{KM02,KR05}.

\begin{thm} 
\label{thm:ACLPQconf} 
Let $\varphi :\Omega\to \widetilde{\Omega}$ be a homeomorphism which satisfies
$$
\limsup\limits_{r\to 0} H_{p,q}^{\lambda}(x,r;\Phi_{p,q})\leq H_{p,q}^{\lambda}(\Phi_{p,q})<\infty, \,\,\text{for all}\,\,x\in\Omega,\,\,1<q<p<\infty.
$$
Then the homeomorphism $\varphi$ belongs to $\ACL(\Omega)$. Moreover $\varphi$ is differentiable almost everywhere in $\Omega$ and 
$\varphi\in W^1_{q,\loc}(\Omega)$.
\end{thm}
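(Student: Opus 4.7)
\medskip

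\noindent
\textbf{Proof plan.} The structure of the proof will parallel that of Theorem \ref{thm:ACLPPconf}, with the new ingredient being the extra factor $(|B(x,r)|/\Phi_{p,q}(B(x,\lambda r)))^{(p-q)/q}$ in the dilatation. The plan is to split this factor off by a H\"older inequality and then control the resulting $\Phi_{p,q}$-piece via Lemma \ref{lem:AddFun}, exactly as the $|\varphi(B(x,\lambda r))|$-piece is controlled via the set function $\Psi(E)=|\varphi(E)|$.

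First I would fix a coordinate cube $P=P_0\times I\subset\Omega$ and, following the $p$-case, introduce the induced set function $\Psi(A,P)=|\varphi(A\times I)|$ on $P_0$. Since $\Phi_{p,q}$ is a bounded monotone countably additive set function on open subsets of $\widetilde{\Omega}$, its pullback $\Phi_{p,q}(A,P):=\Phi_{p,q}(A\times I)$ (or more precisely the analogous construction for $\Phi_{p,q}$ on $\Omega$, which is the natural object here) is also monotone and countably additive; by Lemma \ref{lem:AddFun} both $\overline{\Psi'}(z,P)$ and $\overline{\Phi'_{p,q}}(z,P)$ are finite for almost every $z\in P_0$. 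Fixing such a $z$ and a compact $F\subset I_z$, I would again write $F=\bigcup_k F_k$ where on $F_k$ the dilatation bound holds with constant $\widetilde{H}^\lambda_{p,q}$ for radii $r<1/k$, and apply Lemma \ref{lem:VaCovering} to get centers $x_i\in F_k$, balls $B_i=B(x_i,r)$ of total count $N$ with $Nr\le H^1(F_k)+\varepsilon$ and $2$-overlap.

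The key new step is the exponent bookkeeping. From the dilatation bound,
\[
L_\varphi(x_i,r)^p \le \widetilde{H}^\lambda_{p,q}\,\frac{|\varphi(B(x_i,\lambda r))|}{r^{n-p}}\left(\frac{\Phi_{p,q}(B(x_i,\lambda r))}{|B(x_i,r)|}\right)^{\frac{p-q}{q}},
\]
and raising to the $q/p$ power yields, after collecting the $r$-exponents via $(n-p)q/p+n(p-q)/p=n-q$,
\[
L_\varphi(x_i,r)^q \le C\,\frac{|\varphi(B(x_i,\lambda r))|^{q/p}\,\Phi_{p,q}(B(x_i,\lambda r))^{(p-q)/p}}{r^{n-q}}.
\]
Starting from $H^1_t(\varphi(F_k))\le\sum L_\varphi(x_i,r)$, I would apply H\"older with exponent $q$ to get the factor $N^{q-1}$, substitute the bound above, and then apply H\"older a second time, now with the conjugate pair $(p/q,\,p/(p-q))$, to split
\[
\sum_i |\varphi(B_i^\lambda)|^{q/p}\Phi_{p,q}(B_i^\lambda)^{(p-q)/p}\le\bigl(\textstyle\sum |\varphi(B_i^\lambda)|\bigr)^{q/p}\bigl(\textstyle\sum\Phi_{p,q}(B_i^\lambda)\bigr)^{(p-q)/p}.
\]
The $2$-overlap of the $B_i$ (plus a factor depending only on $\lambda$ and $n$) then lets me bound each sum by the value of the respective set function on a tube around $I_z$ of thickness $O(\lambda r)$, and passing $r,t,\varepsilon\to 0$ and then $k\to\infty$ gives
\[
H^1(\varphi(F))^q\le C(n,\lambda,\widetilde{H})\,H^1(F)^{q-1}\,\overline{\Psi'}(z,P)^{q/p}\,\overline{\Phi'_{p,q}}(z,P)^{(p-q)/p},
\]
which delivers absolute continuity on $I_z$ for a.e.\ $z$, hence $\varphi\in\ACL(\Omega)$.

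For differentiability, dividing the pointwise inequality for $L_\varphi(x,r)^p$ by $r^p$ and using Lemma \ref{lem:AddFun} for both $\Psi$ and $\Phi_{p,q}$ shows $\limsup_{r\to 0}L_\varphi(x,r)/r<\infty$ a.e.; Stepanov's theorem then gives differentiability a.e. Letting $r\to 0$ in the same inequality yields $|D\varphi(x)|^p\le C\,|J(x,\varphi)|\,\Phi'_{p,q}(x)^{(p-q)/q}$ a.e., and one more application of H\"older with exponents $p/q$ and $p/(p-q)$ over a bounded $U\Subset\Omega$ gives
\[
\int_U |D\varphi|^q\,dx\le C\Bigl(\int_U |J(x,\varphi)|\,dx\Bigr)^{q/p}\Bigl(\int_U\Phi'_{p,q}\,dx\Bigr)^{(p-q)/p}<\infty,
\]
the first integral being finite because $\varphi$ is a homeomorphism and the second by part (c) of Lemma \ref{lem:AddFun}. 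This gives $\varphi\in W^1_{q,\loc}(\Omega)$.

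The part I expect to require the most care is the double H\"older bookkeeping of Step 3, in particular verifying that the $r$-exponent collapses to $n-q$ and that the $(p/q,p/(p-q))$-split produces exactly the two sums $\sum|\varphi(B_i^\lambda)|$ and $\sum\Phi_{p,q}(B_i^\lambda)$ whose limits are then controlled by the two a.e.\ finite derivatives; a minor but unavoidable technical point is the overlap constant for the dilated balls $B(x_i,\lambda r)$ when $\lambda>1$, which only enters the final geometric constant.
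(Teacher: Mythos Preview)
Your proposal is correct and follows essentially the same approach as the paper's own proof: the same cube-and-line reduction, the same induced set functions $\Psi(\cdot,P)$ and $\Phi_{p,q}(\cdot,P)$ on $P_0$, the same exhaustion $F=\bigcup_k F_k$, the covering Lemma~\ref{lem:VaCovering}, the two H\"older applications (first with exponent $q$ to extract $N^{q-1}$, then with the pair $(p/q,\,p/(p-q))$ to split the product), and the Stepanov argument for differentiability. The only technical detail the paper makes explicit that you glossed over is that $\Phi_{p,q}$, being a priori defined only on open sets, must first be extended to measurable sets via its absolute continuity before one can form $\Phi_{p,q}(A\times I)$ and before one can verify that the sets $F_k$ are closed; this is a minor point and does not affect your argument.
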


\begin{proof}
Fix an arbitrary cube $P$, $\overline{P}\subset \Omega$ with edges parallel to coordinate axes. We prove that $\varphi$ is absolutely continuous on almost all intersections of $P$ with lines parallel to the axis $x_n$.
Let $P_0$ be the orthogonal projection of $P$ on subspace $\{x_n=0\}=\mathbb R^{n-1}$ and $I$ be the orthogonal projection of $P$ on the axis $x_n$.
Then $P=P_0\times I$.

Since $\varphi$ is the homeomorphism then the Lebesgue measure $\Psi(E)=|\varphi(E)|$ induces by the rule $\Psi(A,P)=\Psi(A\times I)$ the monotone countable-additive function defined on measurable subsets of $P_0$.
By the Lebesgue theorem on differentiability (see, for example, \cite{RR55} the upper $(n-1)$-dimensional volume derivative
$$
\overline{\Psi^{\prime}}(z,P)=\limsup\limits_{r\to 0}\frac{\Psi(B^{n-1}(z,r),P)}{r^{n-1}}
$$
is finite for almost all points $z\in P_0$. Here $B^{n-1}(z,r)$ is $(n-1)$-dimensional ball with center at $z\in P_0$ and radius $r$.

Since $\Phi_{p,q}$ is a bounded monotone countable-additive absolutely continuous set function, then $\Phi_{p,q}$ can be extended on measurable sets $E\subset\Omega$, setting
$$
\Phi_{p,q}(E)=\inf\limits_A \Phi_{p,q}(A), \,\,E\subset A\subset\Omega,
$$
where $A$ is an open set.
This monotone countable-additive function $\Phi_{p,q}$ induces by the rule $\Phi_{p,q}(A,P)=\Phi_{p,q}(A\times I)$ the monotone countable-additive function defined on measurable subsets of $P_0$. By the Lebesgue theorem on differentiability (see, for example, \cite{RR55} the upper $(n-1)$-dimensional volume derivative
$$
\overline{{\Phi}_{p,q}^{\prime}}(z,P)=\limsup\limits_{r\to 0}\frac{{\Phi_{p,q}}(B^{n-1}(z,r),P)}{r^{n-1}}
$$
is also finite for almost all points $z\in P_0$. 

Fix a such point $z\in P_0$ in which $\overline{\Psi^{\prime}}(z,P)<\infty$ and $\overline{{\Phi}_{p,q}^{\prime}}(z,P)<\infty$. Let $I_z=\{z\}\times I$ and $F$ be a compact subset of $I_z$.
By the condition of the theorem the set $F$ is a union of the following increasing sequence of closed sets
$$
F_k=\left\{x\in F : \frac{L^p_{\varphi}(x,r)r^{n-p}}{|\varphi(B(x,\lambda r))|}\leq \widetilde{H}_{p,q}^{\lambda}(\Phi_{p,q})\biggl(\frac{\Phi_{p,q}(B(x,\lambda r))}{r^n}\biggr)^{\frac{p-q}{q}},\,\,\,\text{for all}\,\,\, r<\frac{1}{k}\right\},
$$
where a constant $\widetilde{H}_{p,q}^{\lambda}(\Phi_{p,q})>H_{p,q}^{\lambda}(\Phi_{p,q})$. Note, that closeness of sets $F_k$ follows from the absolute continuity of the set function $\Phi_{p,q}$.

Fix numbers $k$, $\varepsilon>0$ and $t>0$. By Lemma \ref{lem:VaCovering} there exists a number
$\delta >0$
such that for any $r$, $0<r<\min(\delta,\frac{1}{k})$
there exists a sequence $x_i\in F_k$, $i=1,2,...,N$, such that balls $B_i=B(x_i,r)$ cover $F_k$ (moreover each point of $F_k$ is contained in at most two balls), 
$$
Nr\leq H^1(F_k)+\varepsilon \,\, \text{and}\,\, |\varphi(x_i)-\varphi(y)|<t, \, y\in B(x_i,r).
$$ 

The balls $B(\varphi(x_i),L_{\varphi}(x_i,r))$ covering the image $\varphi(F_k)$.
Then, since for every ball its diameter $\diam(\varphi(B(x_i,r)))<t$, we have that
$$
H_{t}^{1}(\varphi(F_k))\leq \sum\limits_{i=1}^{N}L_{\varphi}(x_i,r).
$$
Using the H\"older inequality we obtain
$$
\left(H_{t}^{1}(\varphi(F_k))\right)^q\leq
N^{q-1}\sum\limits_{i=1}^{N}\left(L_{\varphi}(x_i,r)\right)^q.
$$
So, by the definition of the set $F_k$ we have
$$
\left(L_{\varphi}(x_i,r)\right)^q\leq  \left(\widetilde{H}_{p,q}^{\lambda}(\Phi_{p,q})\right)^{\frac{q}{p}}\frac{|\varphi(B(x_i,\lambda r))|^{\frac{q}{p}}\left({\Phi_{p,q}}(B(x_i,\lambda r))\right)^{\frac{p-q}{p}}}{r^{n-q}}.
$$
Hence
\begin{multline}
\left(H_{t}^{1}(\varphi(F_k))\right)^q\leq
N^{q-1}\sum\limits_{i=1}^{N}\left(L_{\varphi}(x_i,r)\right)^q\\
\leq \left(\widetilde{H}_{p,q}^{\lambda}(\Phi_{p,q})\right)^{\frac{q}{p}}N^{q-1}\sum\limits_{i=1}^{N}\frac{|\varphi(B(x_i,\lambda r))|^{\frac{q}{p}}\left({\Phi_{p,q}}(B(x_i,\lambda r))\right)^{\frac{p-q}{p}}}{r^{n-q}}\\
\leq \left(\widetilde{H}_{p,q}^{\lambda}(\Phi_{p,q})\right)^{\frac{q}{p}}(Nr)^{q-1}\left(\frac{\sum\limits_{i=1}^{N}|\varphi(B(x_i,\lambda r))|}{r^{n-1}}\right)^{\frac{q}{p}}
\left(\frac{\sum\limits_{i=1}^{N}\tilde{\Phi}(B_{p,q}(x_i,\lambda r))}{r^{n-1}}\right)^{\frac{p-q}{p}}.
\nonumber
\end{multline}
Since any point of the set $\varphi(F_k)$ belongs no more than two sets $\varphi(B(x_i,r))$, $i=1,2,...,N$,
then
\begin{multline*}
\left(H_{t}^{1}(\varphi(F_k))\right)^q \leq 
c(p,q,\lambda)\left(H^1(F_k)+\varepsilon\right)^{q-1}
\left(\widetilde{H}_{p,q}^{\lambda}(\Phi_{p,q})\right)^{\frac{q}{p}}
\\
\times \left(\frac{\Psi(B^{n-1}(z,\lambda r),P)}{(\lambda r)^{n-1}}\right)^{\frac{q}{p}}\left(\frac{{\Phi_{p,q}}(B^{n-1}(z,\lambda r),P)}{(\lambda r)^{n-1}}\right)^{\frac{p-q}{p}},
\end{multline*}
where a constant $c(p,q,\lambda)$ depends on $p$, $q$ and $\lambda$.

Passing to the limit while $r\to 0$, and turn to zero $\varepsilon$ and $t$ we obtain
$$
\left(H^{1}(\varphi(F_k))\right)^q
\leq c(p,q,\lambda)
\left(\widetilde{H}_{p,q}^{\lambda}(\Phi_{p,q})\right)^{\frac{q}{p}} \left(H^1(F_k)\right)^{q-1}\left(\overline{\Psi^{\prime}}(z,P)\right)^{\frac{q}{p}}\left(\overline{{\Phi}_{p,q}^{\prime}}(z,P)\right)^{\frac{p-q}{p}}.
$$
Since $\varphi(F)$ is the limit of increasing sequence of compact sets $\varphi(F_k)$,
then
$$
H^{1}(\varphi(F))=\lim\limits_{k\to\infty} H^{1}(\varphi(F_k)).
$$
Hence
$$
\left(H^{1}(\varphi(F))\right)^q
\leq c(p,q,\lambda)
\left(\widetilde{H}_{p,q}^{\lambda}(\Phi_{p,q})\right)^{\frac{q}{p}} \left(H^1(F)\right)^{q-1}\left(\overline{\Psi^{\prime}}(z,P)\right)^{\frac{q}{p}}\left(\overline{{\Phi}_{p,q}^{\prime}}(z,P)\right)^{\frac{p-q}{p}},
$$
and therefore $\varphi\in \ACL(\Omega)$.

Now we prove that
$\varphi$
is differentiable almost everywhere in $\Omega$. For all $r<\varepsilon(x)$ the inequality
$$
\biggl(\frac{L_{\varphi}(x,r)}{r}\biggr)^{p}\leq  \widetilde{H}_{p,q}^{\lambda}(\Phi_{p,q}) \frac{|\varphi(B(x,\lambda r))|}{r^n}\left(\frac{{\Phi_{p,q}}(B(x,\lambda r))}{r^n}\right)^{\frac{p-q}{q}}
$$
holds with some constant  $\widetilde{H}_{p,q}^{\lambda}(\Phi_{p,q})>H_{p,q}^{\lambda}(\Phi_{p,q})$. Passing to the limit while $r\to 0$,
we obtain that the inequality 
$$
\limsup\limits_{r\to 0} \biggl(\frac{L_{\varphi}(x,r)}{r}\biggr)^{p}\leq c(p,q,\lambda) \widetilde{H}_{p,q}^{\lambda}(\Phi_{p,q}){\Psi}^{\prime}(x)({\Phi_{p,q}}'(x))^{\frac{p-q}{q}}<\infty
$$
holds for almost all $x\in\Omega$ with some constant $c(p,q,\lambda)$.
By the Stepanov theorem \cite{Fe69} we obtain that $\varphi$ is differentiable almost everywhere in $\Omega$.

Hence
$$
|D\varphi(x)|^q\leq \left(c(p,q,\lambda) \widetilde{H}_{p,q}^{\lambda}(\Phi_{p,q})\right)^{\frac{q}{p}}\left({\Psi}^{\prime}(x)\right)^{\frac{q}{p}}(\Phi_{p,q}^{\prime}(x))^{\frac{p-q}{p}}. 
$$
So, for any bounded open set $U\subset\Omega$, $\overline{U}\subset\Omega$,  by using the H\"older inequality we have
\begin{multline*}
\int\limits_{U}|D\varphi(x)|^q~dx\leq \left(c(p,q,\lambda) \widetilde{H}_{p,q}^{\lambda}(\Phi_{p,q})\right)^{\frac{q}{p}}
\int\limits_{U}\left({\Psi}^{\prime}(x)\right)^{\frac{q}{p}}(\Phi_{p,q}^{\prime}(x))^{\frac{p-q}{p}}~dx\\
\leq \left(c(p,q,\lambda) \widetilde{H}_{p,q}^{\lambda}(\Phi_{p,q})\right)^{\frac{q}{p}}
\left(\int\limits_{U}{\Psi}^{\prime}(x)~dx\right)^{\frac{q}{p}}\left(\int\limits_{U}\Phi_{p,q}^{\prime}(x)~dx\right)^{\frac{p-q}{p}}<\infty. 
\end{multline*}
Therefore $|D\varphi|\in L_{q,\loc}(\Omega)$ and we have that $\varphi\in W^1_{q,\loc}(\Omega)$ \cite{M}.
\end{proof}

From Theorem~\ref{thm:ACLPQconf} follows

\begin{thm}
\label{thm:pq}
Let $\varphi :\Omega\to \widetilde{\Omega}$ be a homeomorphism which satisfies
$$
\limsup\limits_{r\to 0} H_{p,q}^{\lambda}(x,r;\Phi_{p,q})\leq H_{p,q}^{\lambda}(\Phi_{p,q})<\infty, \,\,\text{for all}\,\,x\in\Omega,\,\,1<q<p<\infty.
$$
Then $\varphi$ generates a bounded composition operator 
$$
\varphi^{\ast}: L^1_p(\widetilde{\Omega})\to L^1_q(\Omega).
$$
\end{thm}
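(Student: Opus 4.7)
The plan is to reduce the theorem to Theorem~\ref{CompTh} via a pointwise dilatation estimate, most of whose ingredients are already assembled in the proof of Theorem~\ref{thm:ACLPQconf}. That theorem gives $\varphi\in W^1_{q,\loc}(\Omega)$ and differentiability of $\varphi$ at almost every point $x\in\Omega$, so at each such point
\[
\lim_{r\to 0}\frac{L_\varphi(x,r)}{r}=|D\varphi(x)|,\qquad \lim_{r\to 0}\frac{|\varphi(B(x,\lambda r))|}{|B(x,\lambda r)|}=|J(x,\varphi)|.
\]
By Lemma~\ref{lem:AddFun}(a) the finite derivative $\Phi_{p,q}'(x)$ also exists off a null set, so all three differentiation facts are available simultaneously a.e.

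Passing to the limit $r\to 0$ in the defining inequality for $H_{p,q}^{\lambda}(x,r;\Phi_{p,q})$ and substituting these limits yields the pointwise bound
\[
|D\varphi(x)|^{p}\leq C(n,p,q,\lambda)\,H_{p,q}^{\lambda}(\Phi_{p,q})\,|J(x,\varphi)|\,\bigl(\Phi_{p,q}'(x)\bigr)^{\frac{p-q}{q}}
\]
a.e.\ on $\Omega$. Since the right-hand side vanishes whenever $J(x,\varphi)=0$, the mapping $\varphi$ automatically has finite distortion. Raising both sides to the power $q/p$, dividing by $|J(x,\varphi)|^{q/p}$ on $\{J(x,\varphi)\neq 0\}$, and then raising to the power $p/(p-q)$ produces
\[
K_{p}(x)^{\kappa}=\left(\frac{|D\varphi(x)|}{|J(x,\varphi)|^{1/p}}\right)^{\!\kappa}\leq C'\,\Phi_{p,q}'(x)\quad\text{a.e. on }\Omega,
\]
with $\kappa=pq/(p-q)$ and a constant $C'$ depending only on $n,p,q,\lambda$ and $H_{p,q}^{\lambda}(\Phi_{p,q})$.

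Finally, the boundedness of $\Phi_{p,q}$ together with Lemma~\ref{lem:AddFun}(c), applied to an exhaustion of $\Omega$ by bounded open subsets $U$ with $\overline U\subset\Omega$, gives $\int_{\Omega}\Phi_{p,q}'(x)\,dx\leq \Phi_{p,q}(\Omega)<\infty$, so $\|K_{p}\mid L_{\kappa}(\Omega)\|<\infty$. Combined with $\varphi\in W^1_{q,\loc}(\Omega)$, Theorem~\ref{CompTh} then delivers the bounded composition operator $\varphi^{\ast}\colon L^1_p(\widetilde{\Omega})\to L^1_q(\Omega)$. The main obstacle is in the second paragraph: one must carefully align the a.e.\ differentiability of $\varphi$, the Lebesgue differentiation of the set function $E\mapsto |\varphi(E)|$, and the existence of $\Phi_{p,q}'$ at the same points, and keep track of the $\omega_n$ and $\lambda^n$ factors produced by the differing $B(x,r)$ and $B(x,\lambda r)$ normalizations, so that the exponent on $\Phi_{p,q}'$ comes out to $(p-q)/q$ and the final Lebesgue exponent on $K_p$ really matches $\kappa=pq/(p-q)$.
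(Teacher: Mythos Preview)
Your proposal is correct and follows essentially the same route as the paper: invoke Theorem~\ref{thm:ACLPQconf} for $W^1_{q,\loc}$ and a.e.\ differentiability, pass to the limit in the dilatation inequality to get the pointwise bound $|D\varphi|^p\le C|J(x,\varphi)|(\Phi_{p,q}')^{(p-q)/q}$ (hence finite distortion), rewrite it as $K_p^{\kappa}\le C'\Phi_{p,q}'$, integrate using Lemma~\ref{lem:AddFun}(c) over an exhaustion to bound $\|K_p\mid L_\kappa(\Omega)\|$, and conclude by Theorem~\ref{CompTh}. The only cosmetic difference is that the paper raises directly to the power $q/(p-q)$ rather than going through your three-step power manipulation, but the resulting inequality and the rest of the argument are identical.
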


\begin{proof}
By Theorem~\ref{thm:ACLPQconf} the homeomorphism $\varphi$ belongs to the space $W^1_{q,\loc}(\Omega)$ and is differentiable almost everywhere in $\Omega$. Hence
$$
\lim\limits_{r\to 0}\frac{L_{\varphi}^{p}(x,r)}{r^p}=|D\varphi(x)|^p,\,\,\,\text{for almost all}\,\,\,x\in\Omega,
$$
and
$$
\lim\limits_{r\to 0}\frac{|\varphi(B(x,\lambda r))|}{r^n}
=\omega_n\lambda^n|J(x,\varphi)|,\,\,\,\text{for almost all}\,\,\,x\in\Omega.
$$
So, we obtain
$$
|D\varphi(x)|^p\leq c(p,q,\lambda) \widetilde{H}_{p,q}^{\lambda}(\Phi_{p,q}) |J(x,\varphi)|\Phi^{\prime}_{p,q}(x)^{\frac{p-q}{q}} \,\,\,\text{for almost all}\,\,\,x\in\Omega,
$$
and $\varphi$ is the mapping of finite distortion. 

Hence
$$
\left(\frac{|D\varphi(x)|^p}{|J(x,\varphi)|}\right)^{\frac{q}{p-q}}=\lim\limits_{r\to 0}\left(\frac{L^p_{\varphi}(x,r)r^{n-p}}{|\varphi(B(x,r))|}\right)^{\frac{q}{p-q}}
\leq
c(p,q,\lambda) \widetilde{H}_{p,q}^{\lambda}(\Phi_{p,q})\Phi^{\prime}_{p,q}(x), \,\,\,\text{a.~e. in}\,\,\, \Omega\setminus Z,
$$
where $Z=\{x\in\Omega : J(x,\varphi)=0\}$.

Integrating of the last inequality on an arbitrary open bounded subset $U\subset\Omega$ we obtain
\begin{multline*}
\int\limits_U\left(\frac{|D\varphi(x)|^p}{|J(x,\varphi)|}\right)^{\frac{q}{p-q}}~dx=\int\limits_{U\setminus Z}\left(\frac{|D\varphi(x)|^p}{|J(x,\varphi)|}\right)^{\frac{q}{p-q}}~dx
\\
\leq c(p,q,\lambda) \widetilde{H}_{p,q}^{\lambda}(\Phi_{p,q})\int\limits_{U\setminus Z} \Phi^{\prime}_{p,q}(x)~dx
\leq c(p,q,\lambda) \widetilde{H}_{p,q}^{\lambda}(\Phi_{p,q}) \Phi_{p,q}(U)
\\
\leq c(p,q,\lambda) \widetilde{H}_{p,q}^{\lambda}(\Phi_{p,q}) \Phi_{p,q}(\Omega)<\infty.
\end{multline*}
Since the choice of $U\subset\Omega$ is arbitrary, we have
$$
\int\limits_U\left(\frac{|D\varphi(x)|^p}{|J(x,\varphi)|}\right)^{\frac{q}{p-q}}~dx
\leq c(p,q,\lambda) \widetilde{H}_{p,q}^{\lambda}(\Phi_{p,q}) \Phi_{p,q}(\Omega)<\infty.
$$

Therefore by Theorem~\ref{CompTh}, we have that $\varphi$ generates a bounded composition operator 
$$
\varphi^{\ast}: L^1_p(\widetilde{\Omega})\to L^1_q(\Omega).
$$
\end{proof}

Let a homeomorphism $\varphi :\Omega\to \widetilde{\Omega}$
generates a bounded composition operator 
$$
\varphi^{\ast}: L^1_p(\widetilde{\Omega})\to L^1_q(\Omega),\,\, 1<q<p< \infty.
$$
We defined a bounded monotone countably additive set function $\Phi_{p,q}$ defined on open bounded subsets $A\subset{\Omega}$ by the rule
$$
\Phi_{p,q}(A)=\widetilde{\Phi}_{p,q}(\varphi(A)),
$$
where $\widetilde{\Phi}_{p,q}$ is defined by \eqref{setfunc}. By Theorem~\ref{CompTh}
$$
\Phi_{p,q}(A)\leq\int\limits_A \left(K_p(x)\right)^{\frac{pq}{p-q}}~dx,
$$
and so the set function $\Phi_{p,q}$ is absolutely continuous.

\begin{thm}
\label{thm:pq-1}
Let a homeomorphism $\varphi :\Omega\to \widetilde{\Omega}$
generates a bounded composition operator 
$$
\varphi^{\ast}: L^1_p(\widetilde{\Omega})\to L^1_p(\Omega),\,\, n<q<p< \infty.
$$
Then there exists a constant $H_{p,q}^{1}(\Phi_{p,q})<\infty$ such that
\begin{multline*}
\limsup\limits_{r\to 0} H_{p,q}^{1}(x_0,r;\Phi_{p,q})\\
=\limsup\limits_{r\to 0}\frac{L_{\varphi}^{p}(x_0,r)r^{n-p}}{|\varphi(B(x_0, r))|}\biggl(\frac{|B(x_0,r)|}{{\Phi_{p,q}}(B(x_0, r))}\biggr)^{\frac{p-q}{q}}\leq H_{p,q}^{1}(\Phi_{p,q})<\infty, \,\,\text{for all}\,\,x_0\in\Omega.
\end{multline*}
\end{thm}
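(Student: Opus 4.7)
The plan is to imitate the proof of Theorem~\ref{thm:pp-1}, replacing the single-parameter capacitory inequality by the two-parameter capacity inequality for weak $(p,q)$-quasiconformal mappings that involves the set function $\widetilde{\Phi}_{p,q}$ recalled in the introduction. First I would fix $x_0 \in \Omega$ and $r > 0$ with $B(x_0, 2r) \subset \Omega$, and form exactly the same condenser as in Theorem~\ref{thm:pp-1}: $F_0 = \widetilde{\Omega} \setminus \varphi(B(x_0, r))$ and $F_1 = \{\varphi(x_0)\}$, whose preimage in $\Omega$ is $(\Omega \setminus B(x_0, r), \{x_0\})$. Since $\varphi^\ast$ is a bounded composition operator with $q < p$, the capacity inequality from \cite{U93,VU98} applies:
$$
\cp_q^{1/q}(\varphi^{-1}(F_0), \varphi^{-1}(F_1); \Omega) \leq \widetilde{\Phi}_{p,q}(\widetilde{\Omega} \setminus (F_0 \cup F_1))^{\frac{p-q}{pq}} \cp_p^{1/p}(F_0, F_1; \widetilde{\Omega}).
$$
Since $\widetilde{\Omega} \setminus (F_0 \cup F_1) \subset \varphi(B(x_0, r))$, monotonicity of $\widetilde{\Phi}_{p,q}$ together with the definition $\Phi_{p,q}(A) = \widetilde{\Phi}_{p,q}(\varphi(A))$ gives $\widetilde{\Phi}_{p,q}(\widetilde{\Omega} \setminus (F_0 \cup F_1)) \leq \Phi_{p,q}(B(x_0, r))$.

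The second step is to feed in two classical capacity estimates from \cite{GResh}. The hypothesis $n < q$ is essential here: it guarantees that the $q$-capacity of a point inside a ball is strictly positive, so that
$$
\cp_q(\Omega \setminus B(x_0, r), \{x_0\}; \Omega) \geq c(n,q)\, r^{n-q},
$$
which is exactly what allows us to keep $\lambda = 1$, in parallel with Theorem~\ref{thm:pp-1}. On the right-hand side, the same admissible-function construction as in that theorem yields $\cp_p(F_0, F_1; \widetilde{\Omega}) \leq |\varphi(B(x_0, r))|/L_\varphi^p(x_0, r)$.

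Substituting these bounds and raising to the $p$-th power produces, up to a constant $c(n,p,q)$,
$$
c(n,p,q)\, r^{p(n-q)/q} \leq K_{p,q}^p(\varphi;\Omega)\, \Phi_{p,q}(B(x_0,r))^{(p-q)/q}\, \frac{|\varphi(B(x_0,r))|}{L_\varphi^p(x_0,r)}.
$$
A brief exponent count, $p(q-n)/q + n - p = -n(p-q)/q$, together with $|B(x_0,r)| = \omega_n r^n$, shows that multiplying through by $\bigl(|B(x_0,r)|/\Phi_{p,q}(B(x_0,r))\bigr)^{(p-q)/q}$ cancels the $r$-dependence and isolates exactly the dilatation $H_{p,q}^1(x_0, r; \Phi_{p,q})$ on the left, bounded by a finite constant $H_{p,q}^1(\Phi_{p,q})$ depending only on $n$, $p$, $q$ and the operator norm $K_{p,q}(\varphi;\Omega)$. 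Passing to $\limsup$ as $r \to 0$ concludes. The one spot that deserves care is this exponent bookkeeping; otherwise, every ingredient (the capacitory characterization of weak $(p,q)$-quasiconformality, the set function $\Phi_{p,q}$ from \eqref{setfunc}, and the estimate for the $q$-capacity of a point) is already available.
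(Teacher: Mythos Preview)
Your proposal is correct and follows essentially the same route as the paper: the same condenser $(F_0,F_1)=(\widetilde{\Omega}\setminus\varphi(B(x_0,r)),\{\varphi(x_0)\})$, the same capacitory inequality with $\widetilde{\Phi}_{p,q}$, the same lower bound $\cp_q(\Omega\setminus B(x_0,r),\{x_0\};\Omega)\geq c(n,q)r^{n-q}$ (valid precisely because $q>n$), and the same upper bound $\cp_p(F_0,F_1;\widetilde\Omega)\leq |\varphi(B(x_0,r))|/L_\varphi^p(x_0,r)$, followed by the exponent rearrangement. The only cosmetic difference is that your displayed inequality carries an extra factor $K_{p,q}^p(\varphi;\Omega)$ which is not needed --- the capacity inequality in the $(p,q)$ case already absorbs the operator norm into $\widetilde{\Phi}_{p,q}$ --- but this is harmless since you only claim a finite constant at the end.
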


\begin{proof} Fix a point $x_0\in\Omega$ and $r>0$ such that $B(x_0,2r)\subset\Omega$. In the domain $\widetilde{\Omega}$ we consider a condenser $(F_0,F_1)\subset \widetilde{\Omega}$, where
$$
F_0=\widetilde{\Omega}\setminus \varphi(B(x_0,r)), \,\,F_1=\{\varphi(x_0)\}.
$$
Since the homeomorphism $\varphi :\Omega\to \widetilde{\Omega}$ generates a bounded composition operator 
$$
\varphi^{\ast}: L^1_p(\widetilde{\Omega})\to L^1_q(\Omega),
$$
then by \cite{U93}
$$
\cp_q^{\frac{1}{q}}(\varphi^{-1}(F_0),\varphi^{-1}(F_1);\Omega)\leq \widetilde{\Phi}_{p,q}(\widetilde{\Omega}\setminus(F_0\cup F_1))^{\frac{p-q}{pq}}\cp_p^{\frac{1}{p}}(\varphi(F_0),\varphi(F_1);\widetilde{\Omega}).
$$
By capacity estimates \cite{GResh} and taking into account that 
$$
\widetilde{\Phi}_{p,q}(\widetilde{\Omega}\setminus(F_0\cup F_1))={\Phi}_{p,q}({\Omega}\setminus\varphi^{-1}(F_0\cup F_1)),
$$
we obtain
\begin{multline*}
c(n,q)r^{\frac{n-q}{q}}=\cp_{q}^{\frac{1}{q}}(\varphi^{-1}(F_0),\varphi^{-1}(F_1);\Omega) \\
\leq \left({\Phi}_{p,q}(B(x_0,r))\right)^{\frac{p-q}{pq}}\cp_{p}^{\frac{1}{p}}(F_0,F_1;\widetilde{\Omega})\leq \left({\Phi}_{p,q}(B(x_0,r))\right)^{\frac{p-q}{pq}} \frac{|\varphi(B(x_0,r)|^{\frac{1}{p}}}{L_{\varphi}(x_0,r)}.
\end{multline*}
Hence
$$
c(n,q)\frac{L^p_{\varphi}(x_0,r) r^{\frac{(n-q)p}{q}}}{|\varphi(B(x_0,r)|}\leq \left({\Phi}_{p,q}(B(x_0,r))\right)^{\frac{p-q}{q}},
$$
and so
$$
\frac{L^p_{\varphi}(x_0,r) r^{n-p}}{|\varphi(B(x_0,r)|}\leq c^{-1}(n,p,q) \left(\frac{{\Phi}_{p,q}(B(x_0,r))}{|B(x_0,r)|}\right)^{\frac{p-q}{q}}.
$$

Setting $H_{p,q}^{1}(\Phi_{p,q}):=c^{-1}(n,p,q)$ we obtain
\begin{multline*}
\limsup\limits_{r\to 0} H_{p,q}^{1}(x_0,r;\Phi_{p,q})\\
=\limsup\limits_{r\to 0}\frac{L_{\varphi}^{p}(x_0,r)r^{n-p}}{|\varphi(B(x_0, r))|}\biggl(\frac{|B(x_0,r)|}{{\Phi_{p,q}}(B(x_0, r))}\biggr)^{\frac{p-q}{q}}\leq H_{p,q}^{1}(\Phi_{p,q})<\infty, \,\,\text{for all}\,\,x_0\in\Omega.
\end{multline*}
\end{proof}

In the case $n-1<q<n$ we use the Teichm\"uller type capacity estimates and so we should take $\lambda>1$.

\begin{thm}
\label{thm:pq-lambda}
Let a homeomorphism $\varphi :\Omega\to \widetilde{\Omega}$
generates a bounded composition operator 
$$
\varphi^{\ast}: L^1_p(\widetilde{\Omega})\to L^1_p(\Omega),\,\, n-1<q<n,\, q<p< \infty.
$$
Then there exists a constant $H_{p,q}^{k}(\Phi_{p,q})<\infty$, $\lambda>1$, such that
\begin{multline*}
\limsup\limits_{r\to 0} H_{p,q}^{\lambda}(x_0,r;\Phi_{p,q})\\
=\limsup\limits_{r\to 0}\frac{L_{\varphi}^{p}(x_0,r)r^{n-p}}{|\varphi(B(x_0,\lambda r))|}\biggl(\frac{|B(x_0,r)|}{{\Phi_{p,q}}(B(x_0, \lambda r))}\biggr)^{\frac{p-q}{q}}\leq H_{p,q}^{\lambda}(\Phi_{p,q})<\infty, \,\,\text{for all}\,\,x_0\in\Omega.
\end{multline*}
\end{thm}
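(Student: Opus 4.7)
The plan is to adapt the Teichm\"uller-type condenser argument from the proof of Theorem~\ref{thm:pp-lambda} to the $(p,q)$-setting, following the template of Theorem~\ref{thm:pq-1} but replacing the point condenser $F_1=\{\varphi(x_0)\}$ by two non-degenerate continua, since for $q<n$ a point has zero $q$-capacity and the argument of Theorem~\ref{thm:pq-1} is no longer available.

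Fix $x_0\in\Omega$ and $r>0$ with $B(x_0,2\lambda r)\subset\Omega$, and construct the Teichm\"uller-type condenser $(F_0,F_1)\subset\varphi(B(x_0,\lambda r))$ verbatim as in Theorem~\ref{thm:pp-lambda}: with $y_0=\varphi(x_0)$, pick $y_1\in\varphi(S(x_0,r))$ realizing $|y_1-y_0|=L_{\varphi}(x_0,r)$, let $y_2$ be the second point where the line through $y_0,y_1$ meets $\varphi(S(x_0,r))$, and set $F_0=\{y:|y-y_2|\leq|y_2-y_0|\}\cap\varphi(B(x_0,\lambda r))$ and $F_1=\{y:|y-y_2|\geq|y_2-y_1|\}\cap\varphi(B(x_0,\lambda r))$. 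Apply the capacity inequality from \cite{U93} associated with the bounded operator $\varphi^{\ast}:L^1_p(\widetilde{\Omega})\to L^1_q(\Omega)$, using an admissible test function whose distributional gradient is supported inside $\varphi(B(x_0,\lambda r))$, so that the set-function factor on the right becomes $\widetilde{\Phi}_{p,q}(\varphi(B(x_0,\lambda r)))={\Phi}_{p,q}(B(x_0,\lambda r))$.

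Next, combine this with the two standard capacity estimates for the Teichm\"uller-type condenser: the Teichm\"uller-type lower bound $\cp_q(\varphi^{-1}(F_0),\varphi^{-1}(F_1);\Omega)\geq c(n,q,\lambda)\,r^{n-q}$ from \cite{GResh,Vu}, valid precisely in the range $n-1<q<n$ (and whose constant degenerates as $\lambda\to 1^{+}$, forcing $\lambda>1$), together with the upper bound $\cp_p(F_0,F_1;\widetilde{\Omega})\leq|\varphi(B(x_0,\lambda r))|/L_{\varphi}^p(x_0,r)$ obtained from the natural radial test function on the annular region $\{|y_2-y_0|<|y-y_2|<|y_2-y_1|\}$. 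Substituting yields
\[
c(n,q,\lambda)\,r^{(n-q)/q}\leq{\Phi}_{p,q}(B(x_0,\lambda r))^{(p-q)/(pq)}\,\frac{|\varphi(B(x_0,\lambda r))|^{1/p}}{L_{\varphi}(x_0,r)};
\]
raising to the $p$-th power, rearranging, and using $|B(x_0,r)|=\omega_n r^n$ to collapse all powers of $r$, one obtains
\[
H_{p,q}^{\lambda}(x_0,r;\Phi_{p,q})\leq c(n,q,\lambda)^{-p}\,\omega_n^{(p-q)/q}=:H_{p,q}^{\lambda}(\Phi_{p,q}).
\]
Passing to $\limsup_{r\to 0}$ then yields the claim.

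The main obstacle is the localization of the capacity inequality of \cite{U93}: as written it contains the factor $\widetilde{\Phi}_{p,q}(\widetilde{\Omega}\setminus(F_0\cup F_1))$, which in our setup is potentially large because both Teichm\"uller continua lie inside the small set $\varphi(B(x_0,\lambda r))$, making the complement essentially all of $\widetilde{\Omega}$. Replacing this factor by the local quantity ${\Phi}_{p,q}(B(x_0,\lambda r))$ rests on the observation that the admissible test function realizing the $p$-capacity upper bound has its distributional gradient supported inside $\varphi(B(x_0,\lambda r))$, so that only the local contribution of the set function genuinely enters the composition estimate. A secondary technical point, paralleling the verification needed in Theorem~\ref{thm:pp-lambda}, is to confirm that the preimage continua $\varphi^{-1}(F_0),\varphi^{-1}(F_1)$ are sufficiently nondegenerate inside $B(x_0,\lambda r)$ for the Teichm\"uller-type lower bound to in fact deliver the factor $r^{n-q}$.
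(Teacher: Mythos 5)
Your proposal follows essentially the same route as the paper's proof: the same Teichm\"uller-type condenser $(F_0,F_1)$ inside $\varphi(B(x_0,\lambda r))$, the same capacity inequality from \cite{U93}, the lower bound $c(n,q,\lambda)r^{n-q}$ for the $q$-capacity of the preimage condenser and the upper bound $|\varphi(B(x_0,\lambda r))|/L_{\varphi}^{p}(x_0,r)$ for the $p$-capacity of the image condenser, followed by the same rearrangement. Your explicit treatment of the localization of $\widetilde{\Phi}_{p,q}(\widetilde{\Omega}\setminus(F_0\cup F_1))$ to ${\Phi}_{p,q}(B(x_0,\lambda r))$ is in fact more careful than the paper, which performs this replacement without comment.
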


\begin{proof}
 Fix a point $x_0\in\Omega$ and $r>0$ such that $B(x_0,2 \lambda r)\subset\Omega$. Denote by $y_0:=\varphi(x_0)$.
Let a point $y_1\in f(S(x_0,r))$ is chosen such that $L_{\varphi}(x_0,r)=\rho(y_0,y_1)$.
By symbol $y_2$ we denote the second point of the intersection of the line, passing  through
points $y_1$ and $y_0$, with the set $f(S(x_0,r))$.
In the domain $\widetilde{\Omega}$ we consider continuums
\begin{align}
F_0&=\{y\in \widetilde{\Omega}: |y-y_2|\leq |y_2-y_0|\}
\cap f(B(x_0,\lambda r)),
\nonumber
\\
F_1&=\{y\in \widetilde{\Omega}: |y-y_2|\geq |y_2-y_1|\}
\cap f(B(x_0,\lambda r)).
\nonumber
\end{align}
Since $\varphi$ generates a bounded composition operator 
$$
\varphi^{\ast}: L^1_p(\widetilde{\Omega})\to L^1_q(\Omega),
$$
then by \cite{U93}
$$
\cp_q^{\frac{1}{q}}(\varphi^{-1}(F_0),\varphi^{-1}(F_1);\Omega)\leq \widetilde{\Phi}_{p,q}(\widetilde{\Omega}\setminus(F_0\cup F_1))^{\frac{p-q}{pq}}\cp_p^{\frac{1}{p}}(\varphi(F_0),\varphi(F_1);\widetilde{\Omega}).
$$
By capacity estimates \cite{GResh} and taking into account that 
$$
\widetilde{\Phi}_{p,q}(\widetilde{\Omega}\setminus(F_0\cup F_1))={\Phi}_{p,q}({\Omega}\setminus\varphi^{-1}(F_0\cup F_1)),
$$
we obtain
\begin{multline*}
c(n,q)r^{\frac{n-q}{q}}=\cp_{q}^{\frac{1}{q}}(\varphi^{-1}(F_0),\varphi^{-1}(F_1);\Omega) \\
\leq \left({\Phi}_{p,q}(B(x_0,\lambda r))\right)^{\frac{p-q}{pq}}\cp_{p}^{\frac{1}{p}}(F_0,F_1;\widetilde{\Omega})\leq \left({\Phi}_{p,q}(B(x_0,\lambda r))\right)^{\frac{p-q}{pq}} \frac{|\varphi(B(x_0,\lambda r)|^{\frac{1}{p}}}{L_{\varphi}(x_0,r)}.
\end{multline*}
Hence
$$
c(n,q)\frac{L^p_{\varphi}(x_0,r) r^{\frac{(n-q)p}{q}}}{|\varphi(B(x_0,r)|}\leq \left({\Phi}_{p,q}(B(x_0,r))\right)^{\frac{p-q}{q}},
$$
and so
$$
\frac{L^p_{\varphi}(x_0,r) r^{n-p}}{|\varphi(B(x_0,\lambda r)|}\leq c^{-1}(n,p,q) \left(\frac{{\Phi}_{p,q}(B(x_0,\lambda r))}{|B(x_0,r)|}\right)^{\frac{p-q}{q}}.
$$

Setting $H_{p,q}^{\lambda }(\Phi_{p,q}):=c^{-1}(n,p,q,\lambda)$ we obtain
\begin{multline*}
\limsup\limits_{r\to 0} H_{p,q}^{\lambda }(x_0,r;\Phi_{p,q})\\
=\limsup\limits_{r\to 0}\frac{L_{\varphi}^{p}(x_0,r)r^{n-p}}{|\varphi(B(x_0,\lambda r))|}\biggl(\frac{|B(x_0,r)|}{{\Phi_{p,q}}(B(x_0,\lambda  r))}\biggr)^{\frac{p-q}{q}}\leq H_{p,q}^{\lambda }(\Phi_{p,q})<\infty, \,\,\text{for all}\,\,x_0\in\Omega.
\end{multline*}
\end{proof}

\medskip

\vskip 0.5cm

Alexander Ukhlov; Department of Mathematics, Ben-Gurion University of the Negev, P.O.Box 653, Beer Sheva, 8410501, Israel 
							
\emph{E-mail address:} \email{ukhlov@math.bgu.ac.il

\end{document}